\documentclass{article}
\usepackage{amsfonts}
\usepackage{amsmath}
\usepackage{maa-monthly}
\usepackage{cancel}

\setcounter{MaxMatrixCols}{10}

\theoremstyle{theorem}
\newtheorem{theorem}{Theorem}
\newtheorem{lemma}{Lemma}
\newtheorem{corollary}{Corollary}
\newtheorem{proposition}{Proposition}
\theoremstyle{definition}

\newtheorem{remark}{Remark}
\newtheorem{example}{Example}

\newtheorem{conjecture}{Conjecture}

\input{tcilatex}
\begin{document}

\title{Irrationality and Transcendence of Alternating Series Via Continued
Fractions}
\author{Jonathan Sondow}
\maketitle

\begin{abstract}
Euler gave recipes for converting alternating series of two types, I and II,
into \emph{equivalent} continued fractions, i.e., ones whose convergents
equal the partial sums. A condition we prove for irrationality of a
continued fraction then allows easy proofs that $e,\sin1$, and the primorial
constant are irrational. Our main result is that, if a series of type II is
equivalent to a \emph{simple} continued fraction, then the sum is
transcendental and its irrationality measure exceeds~$2$. We construct all $%
\aleph_0^{\aleph_0}=\mathfrak{c}$ such series and recover the transcendence
of the Davison--Shallit and Cahen constants. Along the way, we mention $\pi$%
, the golden ratio, Fermat, Fibonacci, and Liouville numbers, Sylvester's
sequence, Pierce expansions, Mahler's method, Engel series, and theorems of
Lambert, Sierpi\'{n}ski, and Thue-Siegel-Roth. We also make three
conjectures.
\end{abstract}


\section{\emph{Introductio}.}

\label{SEC:introd} 

In a $1979$ lecture on the Life and Work of Leonhard Euler, Andr\'{e} Weil
suggested ``that our students of mathematics would profit much more from a
study of Euler's \textit{Introductio in Analysin Infinitorum}, rather than
of the available modern textbooks'' \cite[p.~xii]{Euler}. The last chapter
of the \textit{Introductio} is ``On Continued Fractions.'' In it, after
giving their form, Euler ``next look[s] for an equivalent expression in the
usual way of expressing fractions'' and derives formulas for the
convergents. He then converts a continued fraction into an \emph{equivalent}
alternating series, i.e., one whose partial sums equal the convergents. He
``can now consider the converse problem. Given an alternating series, find a
continued fraction such that the series representing the value of the
continued fraction is the given series.''

In Proposition~\ref{THM:altproductseries=CF} and Theorem~\ref%
{THM:altproductseries=SCF}, we recall Euler's solutions for alternating
series of two types, I and II. Lemma~\ref{THM:irratCF}, a simplification of
Nathan's theorem on irrationality of a continued fraction, then yields
conditions for irrationality of the sum of a type I or II series. They
easily imply the irrationality of $e,\sin1,$ and the shifted-Fermat-number
and primorial constants, and give a simple proof of Sierpi\'{n}ski's theorem.

Our main result is that, if a type~II series is equivalent to a \emph{simple}
continued fraction, then the sum has irrationality measure greater than~$2,$
and so must be transcendental, by the Thue-Siegel-Roth theorem on rational
approximations to algebraic numbers.

Corollary~\ref{COR:(iii)} constructs all such series and shows that their
sums form a continuum of distinct transcendental numbers, including the
Davison-Shallit constant.

Corollary~\ref{COR:SCFforCgen} gives explicitly the simple continued
fractions for ``naturally-occurring'' transcendental numbers in a
doubly-infinite family which contains Cahen's constant.

Finally, Proposition~\ref{PROP:K_k} provides irrationality and transcendence
conditions for families of \emph{non}-alternating series, including the
Kellogg-Curtiss constant. Here the proofs involve partial sums instead of
continued fractions.

Along the way, we encounter $\pi$, Fibonacci and golden rectangle numbers,
an alternating Liouville constant, Sylvester's sequence, Pierce expansions,
Mahler's method, and Engel series. We also make three conjectures; one on~$%
e^{-1}$ is an analog of Sondow's conjecture on~$e,$ recently proven by
Berndt, Kim, and Zaharescu.

The rest of the paper is organized as follows. Lemma~\ref{THM:irratCF} and
Proposition~\ref{THM:altproductseries=CF} are in Section~\ref{SEC:lemma};
Theorem~\ref{THM:altproductseries=SCF}, Corollary~\ref{COR:(iii)}, and
Conjectures~1 and~2 are in Section~\ref{SEC:main}; Corollary~\ref%
{COR:SCFforCgen} is in Section~\ref{SEC:Sylvester}; and Proposition~\ref%
{PROP:K_k} and Conjecture~3 are in Section~\ref{Kellogg and Curtiss}.


\section{continued fractions and irrationality.}

\label{SEC:lemma} 

In 1761 Lambert \cite{Lambert} derived a continued fraction for $\tan x$ and
showed that its value is irrational for rational $x\neq0$. Since $\tan \frac{%
\pi}{4}=1$ is rational, Lambert had established that $\pi$ \emph{is
irrational}. For modern treatments of his proof, see \cite[\S 3.6]%
{DuverneyBook} and \cite{Laczkovich}.

Let us denote the positive integers by $\mathbb{N}$ and the rational numbers
by $\mathbb{Q}$. Lemma~\ref{THM:irratCF} provides a sufficient condition for
irrationality of the value of a continued fraction with all elements in~$%
\mathbb{N}$. (Lambert's has both positive and negative elements.) The
statement and quick proof are simplifications of Nathan's theorem in \cite%
{Nathan}.

\begin{lemma}[Irrationality Lemma]
\label{THM:irratCF} Let $\alpha$ be the value of a continued fraction 
\begin{equation*}
\alpha=\frac{b_1}{a_1+\displaystyle{\frac{b_2}{a_2 +\cdot_{\displaystyle{\,
\cdot}_{\displaystyle{\, \cdot}}}}}}, 
\end{equation*}
where $a_n\in\mathbb{N}$ and $b_n\in\mathbb{N}$ and $a_n\ge b_n$ for $%
n=1,2,3,\dotsc$. Then $\alpha\not\in\mathbb{Q}$.
\end{lemma}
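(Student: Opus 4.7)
The plan is an infinite-descent argument applied to the numerators of successive tails of the continued fraction. I would first define, for each $n \geq 1$, the $n$-th tail $\alpha_n$ as the value of the same-form continued fraction starting at the entries $a_n, b_n$, so that $\alpha_1 = \alpha$ and the recurrence
\begin{equation*}
\alpha_n = \frac{b_n}{a_n + \alpha_{n+1}}
\end{equation*}
holds for every $n$. Before descending, I need the two-sided bounds $0 < \alpha_n < 1$. Positivity is immediate since each tail is a limit of positive partial convergents; the strict upper bound will then follow by combining the hypothesis $a_n \geq b_n$ with $\alpha_{n+1} > 0$ to get $\alpha_n < b_n/a_n \leq 1$.

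Next, suppose for contradiction that $\alpha_1 = p_1/q_1$ for some positive integers $p_1, q_1$. Rearranging the recurrence gives
\begin{equation*}
\alpha_{n+1} = \frac{b_n}{\alpha_n} - a_n = \frac{b_n q_n - a_n p_n}{p_n},
\end{equation*}
which suggests defining inductively $q_{n+1} := p_n$ and $p_{n+1} := b_n q_n - a_n p_n$, so that each $\alpha_{n+1} = p_{n+1}/q_{n+1}$ is a ratio of integers (not necessarily in lowest terms). The bounds $0 < \alpha_{n+1} < 1$ then force $0 < p_{n+1} < q_{n+1} = p_n$, producing an infinite strictly decreasing sequence $p_1 > p_2 > p_3 > \cdots$ of positive integers. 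This impossibility forces $\alpha \notin \mathbb{Q}$.

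The only delicate point I anticipate is securing both strict inequalities $0 < \alpha_n < 1$, since the descent collapses if either bound is merely weak. In particular, $\alpha_n < 1$ cannot be deduced from $a_n \geq b_n$ alone; one has to feed in the strict positivity of $\alpha_{n+1}$, which in turn uses that $b_{n+1} \geq 1$ and that $\alpha_{n+2}$ is nonnegative. Once these bounds are in place, the remainder of the argument is essentially bookkeeping, and presumably it is this streamlining of the bookkeeping which distinguishes the proof here from Nathan's original.
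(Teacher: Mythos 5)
Your proposal is correct and follows essentially the same infinite-descent argument as the paper: bound each tail strictly between $0$ and $1$, then descend on the numerators of successive tails. The only cosmetic difference is that the paper writes each tail in lowest terms $u_n/v_n$ and uses $u_{n+1}<v_{n+1}\le u_n$, whereas you keep the unreduced representation with $q_{n+1}=p_n$ and descend directly on $p_n$ — an equally valid (and slightly leaner) piece of bookkeeping.
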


\begin{proof}
If $\alpha\in\mathbb{Q},$ define the $n$th ``tail'' of $\alpha$ to be the value of the continued fraction
\begin{equation} \label{EQ:irrat}
\alpha_n := \frac{b_{n+1}}{a_{n+1}+\displaystyle{\frac{b_{n+2}}{a_{n+2} +\cdot_{\displaystyle{\, \cdot}_{\displaystyle{\, \cdot}}}}}}, \quad \text{so} \quad  \alpha_n = \frac{b_{n+1}}{a_{n+1}+\alpha_{n+1}},
\end{equation}
for all $n\ge0$. The hypotheses ensure that $0<\alpha_n<1$ for all $n\ge0$.  
As $\alpha_0=\alpha,$ and $\alpha_n\in\mathbb{Q}$ implies $\alpha_{n+1}\in\mathbb{Q},$ we can write $\alpha_n=u_n/v_n,$ where $u_n$ and $v_n$ are coprime positive integers with $u_n<v_n$. Thus from \eqref{EQ:irrat} we get
$$\frac{u_{n+1}}{v_{n+1}} =\alpha_{n+1}= \frac{v_nb_{n+1}-u_na_{n+1}}{u_n},$$
so $u_{n+1}<v_{n+1}\le u_{n}$. But then $(u_n)_{n\ge0}$ is a strictly decreasing, infinite sequence of positive integers, which is impossible. Therefore, $\alpha\not\in\mathbb{Q}$. 
\end{proof}

For instance, if $a_n=1$ and $b_n=1$ for all $n,$ then by Lemma \ref%
{THM:irratCF} 
\begin{equation*}
\alpha= \frac{1}{1+\displaystyle{\frac{1}{1 +\cdot_{\displaystyle{\, \cdot}_{%
\displaystyle{\, \cdot}}}}}} = \frac{1}{1+\alpha} >0 \quad \implies \quad
\alpha=\frac{\sqrt{5}-1}{2} \not\in\mathbb{Q}. 
\end{equation*}
Thus \emph{the golden ratio $\varphi:=\alpha^{-1}$ is irrational}. For more
on $\varphi,$ see Examples \ref{EX:Pierce} and~\ref{EX:phi2}.

Lemma \ref{THM:irratCF} generalizes the irrationality of an infinite \emph{%
simple} continued fraction, i.e., one with all \emph{partial numerators} $%
b_n=1$ and all \emph{partial quotients} (or \emph{partial denominators}) $%
a_n\in\mathbb{N}$.

Our hypothesis $a_n\ge b_n$ is weaker than Nathan's $a_n> b_n$. Ours is also 
\emph{sharp}: with the even weaker hypothesis $a_n\ge b_n-1,$ the lemma
would be false, e.g., 
\begin{equation*}
\alpha= \frac{2}{1+\displaystyle{\frac{2}{1+\cdot_{\displaystyle{\, \cdot}_{%
\displaystyle{\, \cdot}}}}}} = \frac{2}{1+\alpha} > 0 \quad \implies \quad
\alpha=1\in\mathbb{Q}. 
\end{equation*}

Lemma \ref{THM:irratCF} holds more generally when $a_n\ge b_n$ for all \emph{%
sufficiently large}~$n$. There is also a condition for irrationality of a
continued fraction with both positive and negative integers $a_n$ and $b_n,$
namely, that $\left\vert a_n\right\vert\ge \left\vert b_n\right\vert+1;$
see, e.g., \cite[\S 3.6]{DuverneyBook}. We have chosen simplicity over
generality here and elsewhere in the paper.

We now apply the Irrationality Lemma to our first kind of alternating
series, type I.

\begin{proposition}
\label{THM:altproductseries=CF} Let $B_0<B_1< B_2< \dotsb$ be positive
integers.

\noindent\textrm{(i).} Then there is an equivalence 
\begin{equation}  \label{EQ:altseries=CF}
\alpha:=\frac{1}{B_0}-\frac{1}{B_1}+\frac{1}{B_2}-\dotsb \cong\frac{1}{B_0+%
\displaystyle{\frac{B_0^2}{B_1-\displaystyle{B_0+\frac{B_1^2}{B_2-B_1+\cdot_{%
\displaystyle{\, \cdot}_{\displaystyle{\, \cdot}}}}}}}}.
\end{equation}

\noindent\textrm{(ii).} Suppose that 
\begin{equation}  \label{EQ:Bineq}
\ B_{n+1}\ge B_n(B_n+1)\ \text{ for all }\ n\ge0.
\end{equation}
Then the sum $\alpha$ is irrational.
\end{proposition}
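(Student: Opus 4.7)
The plan is simply to apply the Irrationality Lemma (Lemma~\ref{THM:irratCF}) to the continued fraction appearing on the right side of the equivalence \eqref{EQ:altseries=CF}. Since part~(i) asserts that the series $\alpha$ and the continued fraction are equivalent---i.e., share partial sums with the convergents---the value of the continued fraction coincides with $\alpha$, and it suffices to verify that the continued fraction meets the hypotheses of Lemma~\ref{THM:irratCF}. Note also that hypothesis \eqref{EQ:Bineq} forces $B_n \to \infty$, so the alternating series converges by the standard test and $\alpha$ is well defined.

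Reading off the partial numerators and denominators from \eqref{EQ:altseries=CF}, I would set $b_1 = 1$ and $a_1 = B_0$, and for $n \ge 2$ take $b_n = B_{n-2}^2$ and $a_n = B_{n-1} - B_{n-2}$. The goal is to check that $a_n, b_n \in \mathbb{N}$ and $a_n \ge b_n$ for every $n$. The case $n = 1$ is immediate since $B_0 \in \mathbb{N}$ gives $a_1 = B_0 \ge 1 = b_1$. For $n \ge 2$, I would invoke \eqref{EQ:Bineq} with index $n-2$, which gives $B_{n-1} \ge B_{n-2}(B_{n-2}+1) = B_{n-2}^2 + B_{n-2}$, whence
\[
a_n = B_{n-1} - B_{n-2} \ge B_{n-2}^2 = b_n \ge 1.
\]
Thus both sequences lie in $\mathbb{N}$ and satisfy $a_n \ge b_n$, so Lemma~\ref{THM:irratCF} yields $\alpha \notin \mathbb{Q}$.

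There is no real obstacle here: the work is entirely bookkeeping, matching the indexing of the $B_n$ against that of the $a_n, b_n$ in Lemma~\ref{THM:irratCF} and observing that the inequality $B_{n+1} \ge B_n(B_n+1)$ in \eqref{EQ:Bineq} has been engineered so that, after the index shift, it becomes exactly the $a_n \ge b_n$ requirement of the Irrationality Lemma. Once the shift is identified, the argument collapses to a single application of Lemma~\ref{THM:irratCF}.
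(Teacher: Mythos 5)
Your proof of part (ii) is correct and is essentially identical to the paper's: the same identification $a_1=B_0$, $b_1=1$, $a_n=B_{n-1}-B_{n-2}$, $b_n=B_{n-2}^2$ for $n\ge2$, followed by the observation that \eqref{EQ:Bineq} yields $a_n\ge b_n$ and an appeal to Lemma~\ref{THM:irratCF}. For part (i) you, like the paper, simply invoke Euler's equivalence rather than proving it, so the two treatments match there as well.
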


\begin{proof}
\noindent (i).
Euler establishes the equivalence in \cite[\S 369]{Euler};
 for example,
 $$\frac{1}{B_0}-\frac{1}{B_1}
= \frac{1}{B_0+\displaystyle{\frac{B_0^2}{B_1-\displaystyle{B_0}}}}.
$$

\noindent (ii).
Set $a_1=B_0,b_1=1,$ $a_{n+1}=B_n-B_{n-1},$ and $b_{n+1}=B_{n-1}^2$ for $n\ge1$. Then \eqref{EQ:Bineq} guarantees that $a_n\ge b_n$ for all $n,$ so by Lemma~\ref{THM:irratCF} the value of the continued fraction in \eqref{EQ:altseries=CF} is irrational. By (i), that value equals the sum $\alpha,$ so $\alpha\not\in\mathbb{Q}$.
\end{proof}

Proposition \ref{THM:altproductseries=CF} provides an easy proof of \emph{%
Sierpi\'{n}ski's theorem}, which states that, \emph{if \eqref{EQ:Bineq}
holds with all $B_n\in\mathbb{N},$ then} $\alpha:=\sum_{n=0}^{%
\infty}(-1)^nB_n^{-1} \not\in\mathbb{Q}$. Sierpi\'{n}ski \cite{Sierpinski}
(see also Cahen \cite{Cahen}) showed moreover that \emph{such a
representation of any irrational number $\alpha$ in $(0,1)$ exists and is
unique}. For extensions of his theorem, see Badea \cite{Badea2}, Duverney 
\cite{Duverney}, and Nyblom \cite{Nyblom}.

Note that \emph{part \textrm{(ii)} and Sierpi\'{n}ski's theorem are sharp}:
if $B_{n+1}+1= B_n( B_n+1) $ for all $n\ge0,$ then $(B_{n+1}+1)^{-1}=
B_n^{-1} -(B_n+1)^{-1},$ so by telescoping 
\begin{equation*}
\sum_{n=0}^{\infty}\frac{(-1)^n}{B_n} = \sum_{n=0}^{\infty} \left( \frac{%
(-1)^n}{B_n+1} + \frac{(-1)^n}{B_{n+1}+1}\right) =\frac{1}{B_0+1}\in\mathbb{Q%
}. 
\end{equation*}

\begin{example}
\label{EX: Fermat} The \emph{Fermat numbers} $F_n=2^{2^n}+1$ form the
sequence \cite[A000215]{Sloane} 
\begin{equation*}
(F_n)_{n\ge0}= 3, 5, 17, 257, 65537, 4294967297, 18446744073709551617,\dotso%
. 
\end{equation*}
Let us define the \emph{shifted-Fermat-number constant} $F$ to be the
alternating sum of reciprocals of the numbers $F_n-2$ (for them, see \cite[%
A051179]{Sloane}) 
\begin{align*}
F:=\ \sum_{n=0}^{\infty}\frac{(-1)^{n}}{F_n-2} = \sum_{n=0}^{\infty}\frac{%
(-1)^{n}}{ 2^{2^n}-1} = 1-\frac{1}{3}+\frac{1}{15}-\frac{1}{255}+\dotsb={%
0.7294270}\dotsc.
\end{align*}
The numbers $B_n:=2^{2^n}-1$ satisfy \eqref{EQ:Bineq}, so \emph{the
shifted-Fermat-number constant $F$ is irrational}. For a generalization with
a different proof, take $\epsilon=-1$ in \cite[Corollary~3.3]{Duverney}. We
return to $F$ in Example~\ref{EX: Fermat2}.
\end{example}

The next section studies irrationality and transcendence of our second kind
of alternating series, type~II, which is a special case of type~I.


\section{simple continued fractions and Transcendence.}

\label{SEC:main} 

Our main results are Theorem~\ref{THM:altproductseries=SCF} and Corollaries~%
\ref{COR:(iii)} and~\ref{COR:SCFforCgen}. We denote the algebraic numbers by 
$\mathbb{A}$ (others denote them by $\overline{\mathbb{Q}},$ the algebraic
closure of $\mathbb{Q}$).

\begin{theorem}
\label{THM:altproductseries=SCF} Fix positive integers $A_0,A_1,A_2,\dotsc,$
with $A_n\ge2$ for all $n\ge1$.

\noindent\textrm{(i).} For any positive real numbers $x_0,x_1,x_2,\dotsc,$
we have the equivalence between an alternating series and a continued
fraction 
\begin{equation}  \label{EQ:equiv}
\alpha: = \sum_{n=0}^{\infty}\frac{(-1)^{n}}{A_0A_1\dotsb A_n} \cong \frac{%
x_0}{A_0x_0+\displaystyle{\frac{A_0x_0x_1}{(A_1-\displaystyle{1)x_1+\frac{%
A_1x_1x_2}{(A_2-\displaystyle{1)x_2 +\cdot_{\displaystyle{\, \cdot}_{%
\displaystyle{\, \cdot}}}}}}}}}.
\end{equation}

\noindent\textrm{(ii).} If $A_{n+1}>A_n$ for all $n\ge0,$ then $\alpha$ is
irrational.

\noindent\textrm{(iii).} If the continued fraction is simple for some $%
x_0,x_1,x_2,\dotsc,$ then $\alpha$ is a transcendental number, with
irrationality measure \mbox{$\mu(\alpha)\ge2.5$}.
\end{theorem}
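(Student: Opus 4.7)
The plan proceeds in three stages matching parts (i)--(iii). For part (i), I verify Euler's equivalence by checking inductively that the $n$th convergent $P_n/Q_n$ of the continued fraction equals the $(n-1)$st partial sum $S_{n-1}=\sum_{j=0}^{n-1}(-1)^j/(A_0A_1\cdots A_j)$: a direct computation gives $P_1/Q_1=1/A_0=S_0$ and $P_2/Q_2=(A_1-1)/(A_0A_1)=S_1$, and the recurrences $P_n=a_nP_{n-1}+b_nP_{n-2}$, $Q_n=a_nQ_{n-1}+b_nQ_{n-2}$ propagate the identity. The $x_n$'s serve only as equivalence multipliers (rescaling $P_n,Q_n$ by a common factor), so they do not affect the convergents. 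For part (ii), I specialize to $x_n=1$: then $a_1=A_0$, $b_1=1$, and for $n\ge1$, $a_{n+1}=A_n-1$ and $b_{n+1}=A_{n-1}$. The hypothesis $A_n>A_{n-1}$ yields $a_{n+1}\ge b_{n+1}$, so Lemma~\ref{THM:irratCF} gives $\alpha\not\in\mathbb{Q}$.

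For part (iii), let $p_k/q_k$ denote the convergents of the simple continued fraction in standard indexing ($q_0=1$, $q_1=A_0$), so that $p_k/q_k=S_{k-1}$. Writing $Q_k:=A_0A_1\cdots A_k$ and equating the absolute values of $p_{k+1}/q_{k+1}-p_k/q_k=\pm1/(q_kq_{k+1})$ and $S_k-S_{k-1}=\pm1/Q_k$ yields the key identity $q_kq_{k+1}=Q_k$; hence $q_{k+1}=A_kq_{k-1}$. From the simple-CF recurrence $q_{k+1}=c_{k+1}q_k+q_{k-1}$ I extract $c_{k+1}=(A_k-1)q_{k-1}/q_k$, and the standard induction from $\gcd(q_0,q_1)=1$ via $\gcd(q_k,q_{k-1})=\gcd(q_{k-1},q_{k-2})$ shows that $\gcd(q_k,q_{k-1})=1$. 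Integrality of $c_{k+1}$ thus forces $q_k\mid A_k-1$, and combined with $A_k\ge2$ this gives $A_k\ge q_k+1$, whence the crucial growth bound $q_{k+1}=A_kq_{k-1}\ge(q_k+1)q_{k-1}>q_{k-1}q_k$.

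To conclude $\mu(\alpha)\ge 5/2$, I claim $q_{k+1}\ge q_k^{3/2}$ for infinitely many $k$. Suppose otherwise: $q_{k+1}<q_k^{3/2}$ for every $k\ge N$. Combined with $q_{k+1}>q_{k-1}q_k$, this forces $q_{k-1}<q_k^{1/2}$, i.e., $q_k>q_{k-1}^2$, for $k\ge N+1$; iterating yields the double-exponential lower bound $q_k>q_N^{2^{k-N}}$. But iterating $q_{k+1}<q_k^{3/2}$ only yields $q_k<q_N^{(3/2)^{k-N}}$, and since $q_N>1$ eventually (the $q_k$'s are unbounded) while $2>3/2$, the two bounds are incompatible for large $k-N$. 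So the assumption fails, $q_{k+1}\ge q_k^{3/2}$ holds infinitely often, and $|\alpha-p_k/q_k|<1/(q_kq_{k+1})\le 1/q_k^{5/2}$ along a subsequence, giving $\mu(\alpha)\ge 5/2$. The Thue--Siegel--Roth theorem then precludes $\alpha$ from being algebraic, so $\alpha$ is transcendental. The main obstacle is the step $A_k\ge q_k+1$, which relies crucially on coprimality of consecutive simple-CF denominators; without this arithmetic rigidity, the growth estimates would be too weak to produce a subsequence with $q_{k+1}\ge q_k^{3/2}$.
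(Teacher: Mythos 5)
Your proof is correct and follows essentially the same route as the paper's: the key identity $q_kq_{k+1}=A_0A_1\dotsb A_k$ extracted from equating consecutive differences, coprimality of consecutive denominators forcing a divisibility that makes the partial quotients large relative to $q_k$, the resulting bound $\left\vert\alpha-p_k/q_k\right\vert<q_k^{-5/2}$ infinitely often, and Thue--Siegel--Roth. The only difference is that where the paper cites Davison--Shallit's ``simple lemma'' for the statement that $a_{n+1}=w_nq_{n-1}\ge\sqrt{q_n}$ infinitely often, you prove the equivalent growth estimate $q_{k+1}\ge q_k^{3/2}$ from scratch via the double-exponential versus single-exponential contradiction, so your part (iii) is a self-contained version of the paper's argument.
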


The \emph{irrationality measure} (or \emph{irrationality exponent}) $%
\mu(\rho)$ of a real number~$\rho$ is defined as (see \cite{BBC, BRS, BC}, 
\cite[\S 1.4]{BPSZ}, \cite[Chapter~9]{DuverneyBook}, \cite[\S 2.22]{Finch}, 
\cite{Sondow}) 
\begin{equation}  \label{EQ:mu def}
\mu(\rho) := \sup\left\{\mu>0: 0 < \left\vert \rho - \frac{p}{q} \right\vert
< \frac{1}{q^{\mu}} \text{ for infinitely many } \frac{p}{q} \in\mathbb{Q}
\right\}.
\end{equation}
By the famous \emph{Thue-Siegel-Roth theorem} \cite{Adam}, \cite[p.~22]{BPSZ}%
, \cite[p.~147]{DuverneyBook}, \cite[p.~172]{Finch}, \cite[p.~176]{HW} 
\begin{equation*}
\mu(\rho) 
\begin{cases}
=1 \quad \text{if $\rho$ is rational}, \\ 
=2 \quad \text{if $\rho$ is irrational, but algebraic}, \\ 
\ge2 \quad \text{if $\rho$ is transcendental}.%
\end{cases}
\end{equation*}

\begin{proof}[Proof of Theorem~\ref{THM:altproductseries=SCF}]
\noindent(i). Apply Proposition~\ref{THM:altproductseries=CF}, part~(i), with $B_n:=A_0A_1\dotsb A_n$ for $n\ge0$. Since $B_{n}-B_{n-1}=(A_{n}-1)A_0A_1\dotsb A_{n-1},$ cancelling the common factors $A_0,A_0A_1,$ $A_0A_1A_2,\dotsc$ in the resulting continued fraction gives
\begin{align*} \label{EQ:equiv1}
&\frac{1}{A_0}-\frac{1}{A_0A_1}+\frac{1}{A_0A_1A_2}-\dotsb 
\cong\frac{1}{A_0+\displaystyle{\frac{A_0^{\cancel{2}}}{(A_1-\displaystyle{1)\cancel{A_0}+\frac{A_0^{\cancel{2}}A_1^2}{(A_2-\displaystyle{1)A_0A_1 +\cdot_{\displaystyle{\, \cdot}_{\displaystyle{\, \cdot}}}}}}}}}\nonumber\\
&\cong\frac{1}{A_0+\displaystyle{\frac{A_0}{A_1-\displaystyle{1+\frac{\cancel{A_0}A_1^{\cancel{2}}}{(A_2-\displaystyle{1)\cancel{A_0A_1} +\cdot_{\displaystyle{\, \cdot}_{\displaystyle{\, \cdot}}}}}}}}}\nonumber 
\cong \frac{1}{A_0+\displaystyle{\frac{A_0}{A_1-\displaystyle{1+\frac{A_1}{A_2-\displaystyle{1 +\cdot_{\displaystyle{\, \cdot}_{\displaystyle{\, \cdot}}}}}}}}},
\end{align*}
where ``$\cong$'' between two continued fractions means they are {\em equivalent}, i.e., they
have the same convergents (see \cite[p.~25]{DuverneyBook}; for two numerical continued fractions which are equivalent but not equal, see Example~\ref{EX:primordial} below).
This proves the special case of (i) in which all $x_n=1$ (compare to \cite[\S370]{Euler}).
The general case follows by cancelling the common factors $x_0,x_1,x_2,\dotsc$ in \eqref{EQ:equiv}.

\noindent(ii).
In Lemma \ref{THM:irratCF}, we take $a_1:=A_0,b_1:=1,$ $a_n:=A_{n-1}-1,$ and \mbox{$b_n:=A_{n-2}$} for $n\ge2$.
Then $A_{n+1}>A_n$ implies $a_n\ge b_n$ for all $n\ge1,$ so $\alpha\not\in\mathbb{Q}$.

\noindent(iii). (Compare to the proof of \cite[Theorem~3]{SD}.)
Redefining $a_1,a_2,\dotsc$, we write the simple continued fraction for $\alpha,$ and its $n$th convergent, as usual as
$$\alpha = 0+\frac{1}{a_1+\displaystyle{\frac{1}{a_2+\displaystyle{\cdot_{\displaystyle{\, \cdot}_{\displaystyle{\, \cdot}}}}}}} = [0,\,a_1,\,a_2,\,\dotsc]
\quad \text{and} \quad \frac{p_n}{q_n} =  [0,\,a_1,\,a_2,\,\dotsc,\,a_n].
$$
The hypothesis in (iii)
means that
\begin{equation} \label{EQ:SCF}
\sum_{i=0}^{n}\frac{(-1)^{i}}{A_0A_1\dotsb A_i}  = \frac{p_{n+1}}{q_{n+1}} \ \text{ for }  \ n\ge0.
\end{equation}
A classical theorem \cite[Theorem 150]{HW} and relation \eqref{EQ:SCF} imply, respectively, that
\begin{equation} \label{EQ:classic}
 \frac{(-1)^{n}}{q_{n}q_{n+1}}= \frac{p_{n+1}}{q_{n+1}} - \frac{p_{n}}{q_{n}} = \frac{(-1)^{n}}{A_0A_1\dotsb A_n}
\end{equation}
for $n\ge1$. Hence $q_{n}q_{n+1} = A_0A_1\dotsb A_n;$ since $q_0=1$ and $q_1=A_0,$ this also holds for $n=0$.
It follows that the divisibility $q_nq_{n+1}\mid q_{n+1}q_{n+2}$ holds; hence $q_n\mid q_{n+2}$. A~standard identity \cite[Theorem~149]{HW} is
\begin{equation} \label{EQ:qn}
q_{n+2} = a_{n+2}q_{n+1}+q_n,
\end{equation}
so $q_n\mid a_{n+2}q_{n+1}$. Multiplying \eqref{EQ:classic} by $q_{n}q_{n+1},$ we deduce that $\gcd(q_n,q_{n+1})=1,$ so $q_n\mid a_{n+2}$. Define $w_0,w_1,\dotsc$ in $\mathbb{N}$ by $w_0=a_1$ and $ w_{n+1}q_n = a_{n+2}$
for $n\ge0$. By a ``simple lemma'' \cite[Lemma~2]{SD}, 
\begin{equation} \label{EQ:sqrt}
w_nq_{n-1}\ge\sqrt{q_n} \quad \text{for infinitely many } n.
\end{equation}
Now, from \eqref{EQ:SCF}, a classical inequality \cite[p.~24]{BPSZ}, the equality $ a_{n+1} = w_{n}q_{n-1},$ and \eqref{EQ:sqrt},
respectively, we see that
$$0 < \left\vert \alpha - \frac{p_{n}}{q_{n}}  \right\vert < \frac{1}{a_{n+1}q^2_{n}} = \frac{1}{w_{n}q_{n-1}q^2_{n}} \le \frac{1}{q^{5/2}_{n}}$$
infinitely often.
This and definition \eqref{EQ:mu def} imply $\mu(\alpha)\ge2.5$. By the Thue-Siegel-Roth theorem, $\mu(\rho)\le2$ if $\rho\in\mathbb{A},$ 
so $\alpha\not\in\mathbb{A}$. This completes the proof of Theorem~\ref{THM:altproductseries=SCF}.
\end{proof}

Note that \emph{the hypothesis in \textrm{(ii)} is sharp}: if $A_n=A_0>1$
for all $n>0,$ then the series in \eqref{EQ:equiv} is geometric, with sum $%
\alpha=(A_0+1)^{-1}\in\mathbb{Q}$. Also, in (ii) the inequality $A_{n+1}>A_n$
is much weaker than that in \eqref{EQ:Bineq} with $B_n=A_0A_1\dotsb A_n,$
which amounts to $A_{n+1}>A_0A_1\dotsb A_n$. Compare Examples~\ref{EX:
Fermat} and ~\ref{EX: Fermat2}.

For any strictly increasing sequence of positive integers $%
A_0<A_1<A_2<\dotsb,$ finite or infinite, the alternating sum 
\begin{equation*}
\alpha := \frac{1}{A_0} -\frac{1}{A_0A_1}+ \frac{1}{A_0A_1A_2}-\dotsb 
\end{equation*}
is called the \emph{Pierce expansion} of~$\alpha$. \emph{Any number $%
\alpha\in(0,1)$ has a unique Pierce expansion, which is infinite if, and
only if, $\alpha$ is irrational} \cite{PVB98,PVB,Pierce,Sierpinski}. The
``only if'' part follows immediately from (ii).

\begin{example}
\label{EX:Pierce} The Pierce expansion of $\varphi^{-1}$ begins \cite[%
A118242, A006276]{Sloane} 
\begin{equation*}
\frac{1}{\varphi} = \frac11 - \frac{1}{1\cdot2} + \frac{1}{1\cdot2\cdot4} - 
\frac{1}{1\cdot2\cdot4\cdot17} + \frac{1}{1\cdot2\cdot4\cdot17\cdot19}
-\dotsb. 
\end{equation*}
As $\varphi^{-1}\in\mathbb{A},$ we see that \emph{the hypothesis in \textrm{%
(iii)} cannot be omitted}. Combined with the next example, this shows that, 
\emph{if the Pierce expansion of $\alpha\not\in\mathbb{Q}$ is not equivalent
to a simple continued fraction, then $\alpha\in\mathbb{A}$ is possible, but
so is $\alpha\not\in\mathbb{A}$}.
\end{example}

\begin{example}
\label{EX:e} Euler \cite[p.~325]{Euler} says, ``Something especially
deserving of our attention is the number~$e\dotso$.'' The Taylor series $e^t
= \sum_{n=0}^{\infty} t^nn!^{-1}$ and (i) lead to the Pierce expansion of $%
e^{-1}$ and the equivalence 
\begin{equation}  \label{EQ:e}
e^{-1}=\sum_{n=2}^{\infty} \frac{(-1)^n}{2\cdot3\cdot4\dotsb n} \cong \frac{%
x_0}{2x_0+\displaystyle{\frac{2x_0x_1}{2x_1+\displaystyle{\frac{3x_1x_2}{%
3x_2+\displaystyle{\frac{4x_2x_3}{4x_3+\cdot_{\displaystyle{\, \cdot}_{%
\displaystyle{\, \cdot}}}}}}}}}}.
\end{equation}
Part (ii) now gives an easy proof that \emph{$e$ is irrational}. The Taylor
series for $\sin t$ and $\cos t$ lead to similar proofs that $\sin\frac1k$ 
\emph{and $\cos\frac1k$ are irrational for all} $k\in\mathbb{N}$.

From \eqref{EQ:e} we also see that a strong converse to (iii) is not true.
Namely, \emph{although $e^{-1}\not\in\mathbb{A}$} (because $e\not\in\mathbb{A%
}$ by Hermite \cite[\S 12.14]{DuverneyBook}), \emph{the type II series for $%
e^{-1}$ in \eqref{EQ:e} is not equivalent to a simple continued fraction}.
Indeed, when $x_0,x_1,\dotso$ are chosen so that all partial numerators in
the continued fraction for $e^{-1}$ in \eqref{EQ:e} equal~$1$ 
\begin{equation}  \label{EQ:not simple}
\frac1e = \frac{1}{2+\displaystyle{\frac{2(1/2)}{2(1/2)+\displaystyle{\frac{%
3(1/2)(2/3)}{3(2/3)+\displaystyle{\frac{4(2/3)(3/8)}{4(3/8)+\cdot_{%
\displaystyle{\, \cdot}_{\displaystyle{\, \cdot}}}}}}}}}} = \frac{1}{2+%
\displaystyle{\frac{1}{1+\displaystyle{\frac{1}{2+\displaystyle{\frac{1}{%
\displaystyle{(3/2)}+\cdot_{\displaystyle{\, \cdot}_{\displaystyle{\, \cdot}%
}}}}}}}}}
\end{equation}
the partial quotients do not all lie in $\mathbb{N}$. For a weaker converse
to (iii), which is also not true, see Example \ref{EX:lambda}.
\end{example}

By \eqref{EQ:not simple}, the simple continued fraction for $e^{-1}$ begins $%
e^{-1} = [0,2,1,2,\dotsc]$. From \eqref{EQ:e} (or by inspection), the first
four convergents are also partial sums of the Taylor series $e^{-1}=
\sum_{n=0}^{\infty} (-1)^nn!^{-1}$.

\begin{conjecture}
\label{CONJ:wpsa} Only four partial sums of the Taylor series for $e^{-1}$
are convergents to $e^{-1},$ namely, $0, 1/2, 1/3,$ and $3/8$.
\end{conjecture}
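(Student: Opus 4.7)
The plan is to combine a classical convergent estimate with an arithmetic bound on $\gcd(N!,\,!N)$. Writing $S_N=!N/N!$ in lowest terms as $P_N/Q_N$, so that $Q_N=N!/g_N$ with $g_N:=\gcd(N!,\,!N)$, one first checks directly for $N\le 4$ that the four partial sums $S_1=0$, $S_2=1/2$, $S_3=1/3$, $S_4=3/8$ are indeed convergents of $e^{-1}=[0;2,1,2,1,1,4,1,1,6,\dotsc]$ (they are its first four). The remaining task is to show $S_N$ is never a convergent when $N\ge 5$.

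The key inequality is the following. If $S_N$ were a convergent $p_n/q_n$ of $e^{-1}$ with $n\ge 1$, then the classical bound $|e^{-1}-p_n/q_n|<1/q_n^2$ would give $|e^{-1}-S_N|<g_N^2/(N!)^2$. On the other hand, the alternating-series lower bound yields
$$|e^{-1}-S_N|\ >\ \frac{1}{(N+1)!}-\frac{1}{(N+2)!}\ =\ \frac{N+1}{(N+2)!}.$$
Comparing the two forces $g_N>\sqrt{N!/(N+2)}$, so Conjecture~\ref{CONJ:wpsa} reduces to the arithmetic bound
$$g_N\ =\ \gcd(N!,\,!N)\ \le\ \sqrt{N!/(N+2)}\qquad\text{for all }N\ge 5.$$
This is numerically generous: $(g_5,\dotsc,g_{10})=(4,5,18,7,8,81)$, whereas $\lfloor\sqrt{N!/(N+2)}\rfloor$ gives $(4,9,23,63,181,549)$.

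To prove the arithmetic bound, I would work prime-by-prime, writing $v_p(g_N)=\min(v_p(N!),v_p(!N))$ for each prime $p\le N$. Legendre's formula gives $v_p(N!)=\sum_{i\ge 1}\lfloor N/p^i\rfloor$. For $v_p(!N)$ one can exploit the recurrence $!N=N\cdot{!(N-1)}+(-1)^N$, the closed form $!N=\lfloor N!/e+1/2\rfloor$ for $N\ge 1$, and standard subfactorial congruences such as $!N\equiv(-1)^N\pmod N$, which already imply $\gcd(N,!N)=1$. Summing $v_p(g_N)\log p$ over primes $p\le N$ and comparing with $\tfrac12\log(N!/(N+2))$ should yield the bound.

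The main obstacle is uniform control on $v_p(!N)$ across all primes $p\le N$. The analog of Conjecture~\ref{CONJ:wpsa} for $e$ was proved by Berndt, Kim, and Zaharescu via a delicate study of the numerators of the Taylor partial sums of $e$ combined with Euler's regular continued fraction $e=[2;1,2,1,1,4,\dotsc]$. Since $e^{-1}$ shares this continued fraction (shifted by one entry), the continued-fraction side of the argument transports directly; but the arithmetic of $g_N=\gcd(N!,\,!N)$ appears more sensitive than that of its $e$-counterpart, and the real work will be in establishing subfactorial congruences sharp enough to force the displayed upper bound.
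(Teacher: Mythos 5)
The statement you are addressing is stated in the paper as a \emph{conjecture}, not a theorem: the paper offers no proof, and explicitly presents it as the $e^{-1}$-analog of Sondow's conjecture for $e$, whose resolution by Berndt, Kim, and Zaharescu required a substantial separate paper. Your proposal does not close this gap. What you have correctly done is the standard (and valid) reduction: writing the $N$th partial sum as $!N/N!=P_N/Q_N$ with $Q_N=N!/g_N$ and $g_N=\gcd(N!,\,!N)$, the necessary condition $|e^{-1}-p_n/q_n|<1/q_n^2$ for convergents, combined with the alternating-series lower bound $(N+1)/(N+2)!$, shows that $S_N$ can only be a convergent if $g_N>\sqrt{N!/(N+2)}$. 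Your arithmetic here checks out, as do your values of $g_N$ for $5\le N\le 10$. But this merely restates the conjecture as the inequality $g_N\le\sqrt{N!/(N+2)}$ for $N\ge5$, and that inequality is precisely the open content of the problem.

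The final paragraphs of your proposal do not supply an argument for this bound; they list ingredients (Legendre's formula, the recurrence $!N=N\cdot{!(N-1)}+(-1)^N$, the congruence $!N\equiv(-1)^N\pmod N$) that are far too weak. The congruence $\gcd(N,\,!N)=1$ controls only the single modulus $N$ and says nothing about $v_p(!N)$ for the many small primes $p$ whose high powers divide $N!$; a priori nothing you cite rules out, say, $2^{v_2(N!)}\mid\,!N$ for some large $N$, which alone could push $g_N$ past the threshold. Obtaining uniform upper bounds on $\min(v_p(N!),v_p(!N))$ summed over all $p\le N$ is exactly the ``delicate study'' you attribute to Berndt--Kim--Zaharescu in the $e$ case, and you acknowledge yourself that ``the real work will be in establishing subfactorial congruences sharp enough.'' So the proposal is a reduction plus a numerical check for six values of $N$, not a proof; the statement remains a conjecture, consistent with how the paper presents it.
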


Conjecture \ref{CONJ:wpsa} is an analog for $e^{-1}$ of the fact that \emph{%
only two partial sums of the Taylor series for $e$ are convergents to $e,$
namely, $2$ and} $8/3$. This property of~$e$ was conjectured by Sondow \cite%
{Sondow}, partially proven by him and Schalm \cite{SondowSchalm}, and
recently proven in full by Berndt, Kim, and Zaharescu \cite{BKZ}.

\begin{example}
\label{EX:primordial} An analog of series \eqref{EQ:e} for $e^{-1},$ with
the factorial $n!$ replaced by the primorial $p_n\#,$ is ``the constant
obtained through Pierce retro-expansion of the prime sequence'' \cite[A132120%
]{Sloane}, which we dub the \emph{primorial constant} 
\begin{align*}
P:=\sum_{n=1}^{\infty}\frac{(-1)^{n-1}}{p_n\#} &= \frac{1}{2}-\frac{1}{%
2\cdot3}+\frac{1}{2\cdot3\cdot5}-\frac{1}{2\cdot3\cdot5\cdot7}+\frac{1}{%
2\cdot3\cdot5\cdot7\cdot11}-\dotsb \\
&=\frac{1}{2}-\frac{1}{6}+\frac{1}{30}-\frac{1}{210}+\frac{1}{2310}-\dotsb =
0.3623062223\dotso.  \notag
\end{align*}
Proposition~\ref{THM:altproductseries=CF}, part (i), and Theorem~\ref%
{THM:altproductseries=SCF}, parts (i) and (ii), imply that 
\begin{align*}
P = \frac{1}{2+\displaystyle{\frac{2^2}{4+\displaystyle{\frac{6^2}{24+%
\displaystyle{\frac{30^2}{180+\displaystyle{\frac{210^2}{2100+\cdot_{%
\displaystyle{\, \cdot}_{\displaystyle{\, \cdot}}}}}}}}}}}} \cong \frac{1}{2+%
\displaystyle{\frac{2}{2+\displaystyle{\frac{3}{4+\displaystyle{\frac{5}{6+%
\displaystyle{\frac{7}{10+\cdot_{\displaystyle{\, \cdot}_{\displaystyle{\,
\cdot}}}}}}}}}}}} \not\in\mathbb{Q}.
\end{align*}
\end{example}

\begin{conjecture}
The primorial constant $P$ is transcendental.
\end{conjecture}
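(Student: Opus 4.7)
The most direct approach would be to apply Theorem~\ref{THM:altproductseries=SCF}(iii) with $A_n = p_{n+1}$. Before searching for suitable $x_n,$ however, I would first check whether $P$ could be equivalent to \emph{any} simple continued fraction. By the argument in the proof of part (iii), such an equivalence forces the convergent denominators $q_n$ to satisfy $q_0 = 1,\ q_1 = A_0 = 2,$ and $q_n q_{n+1} = p_{n+1}\#,$ hence $q_{n+2} = p_{n+2}\, q_n.$ This gives $q_2 = 3,\ q_3 = 10,\ q_4 = 21,\ q_5 = 110,\ldots.$ But then the recurrence $q_3 = a_3 q_2 + q_1$ forces $a_3 = 8/3 \notin \mathbb{N}.$ So $P$ is \emph{not} equivalent to a simple continued fraction, and Theorem (iii) cannot be invoked directly.

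Failing that, my next attempt would be to lower-bound $\mu(P)$ using the partial sums of the defining series. These give rational approximations $p/p_n\#$ with error less than $1/(p_{n+1}\, p_n\#);$ since $p_{n+1} = o(p_n\#),$ this yields only $\mu(P) \ge 2,$ which is insufficient for the Thue-Siegel-Roth route to transcendence. To push $\mu(P) > 2$ one would need approximations asymptotically sharper than the partial sums---presumably coming from a detectable pattern of unusually large partial quotients in the simple continued fraction of $P.$ I would therefore begin by computing a large initial segment of that expansion numerically and looking for such a pattern.

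The main obstacle is that the primorial grows only ``singly exponentially'' ($\log p_n\# \sim p_n \sim n\log n$), in sharp contrast to the Fermat-, Sylvester-, and Cahen-type sequences whose hyper-exponential growth is precisely what supplies the good rational approximations used throughout this paper. Moreover, $p_n\#$ satisfies no obvious polynomial or functional recurrence, so Mahler's method is unavailable. A successful proof would likely have to either (a) uncover an arithmetic identity relating the simple-continued-fraction convergents of $P$ to the prime-factorization structure of primorials, or (b) apply the Schmidt Subspace theorem---in the spirit of Adamczewski-Bugeaud transcendence criteria---to a cleverly chosen family of linear forms built from partial sums of $P.$ I would attempt (b), but expect the conjecture to remain genuinely hard.
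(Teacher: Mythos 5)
The statement you were asked to prove is presented in the paper as a \emph{conjecture}: the author offers no proof that the primorial constant $P$ is transcendental, and none is known. So the honest outcome here is that you did not, and could not be expected to, produce a proof; what can be judged is whether your diagnosis of the difficulty is sound, and it is. Your computation ruling out an appeal to Theorem~\ref{THM:altproductseries=SCF}, part (iii), is correct: with $A_0=2$ and $A_n=p_{n+1}$ for $n\ge1$, the relation $q_nq_{n+1}=A_0A_1\dotsb A_n$ forces $q_0=1$, $q_1=2$, $q_2=3$, $q_3=10$, and then $q_3=a_3q_2+q_1$ gives $a_3=8/3\notin\mathbb{N}$, so the type~II series for $P$ is not equivalent to a simple continued fraction. (Equivalently, Corollary~\ref{COR:(iii)}, part (ii), fails at the second step: $A_2=5$ would require $3M_2+1=5$.) This parallels the paper's own observation in Example~\ref{EX:e} that the series for $e^{-1}$ is not equivalent to a simple continued fraction, and it explains why the author proves only the irrationality of $P$ (Example~\ref{EX:primordial}) and leaves transcendence as a conjecture.

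One correction to your second paragraph: the partial sums give $\left\vert P - P_n/Q_n\right\vert < 1/(p_{n+1}Q_n)$ with $Q_n=p_n\#$, and since $\log p_{n+1} = o(\log Q_n)$ these approximations have exponent tending to $1$, not $2$; the bound $\mu(P)\ge2$ is automatic from the irrationality of $P$ (every irrational number has $\mu\ge2$), not a consequence of the partial sums. The substantive point you identify is nevertheless the right one: the merely single-exponential growth of $p_n\#$, and the absence of any functional equation for Mahler's method, are exactly what separate $P$ from the Fermat-, Sylvester-, and Cahen-type constants handled in the paper. Your proposed Subspace-theorem attack is a plausible direction, but the conjecture should be regarded as open, consistent with the paper's treatment.
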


\begin{example}
\label{EX: Fermat2} By induction, for $n\ge0$ the shifted Fermat number $%
F_n-2$ can be factored as the product of all smaller Fermat numbers 
\begin{equation}  \label{EQ:Polya}
F_{n}-2=2^{2^n}-1=\prod_{k=0}^{n-1}\left(2^{2^k}+1\right)= F_0F_1\dotsb
F_{n-1},
\end{equation}
where the empty product equals~$1$ when $n=0$. (From \eqref{EQ:Polya} P\'{o}%
lya deduced that $F_0,F_1,F_2,\dotso$ are pairwise coprime, thereby giving
an alternate proof to Euclid's theorem on the infinitude of the primes \cite[%
\S 2.4]{HW}.) The constant $F$ in Example~\ref{EX: Fermat} thus has Pierce
expansion 
\begin{align*}
F= \sum_{n=0}^{\infty}\frac{(-1)^{n}}{F_0 F_1\dotsb F_{n-1}} =\frac{1}{1}-%
\frac{1}{1\cdot3}+\frac{1}{1\cdot3\cdot5}-\frac{1}{1\cdot3\cdot5\cdot17}%
+\dotsb.
\end{align*}
Part (ii) of Theorem~\ref{THM:altproductseries=SCF} now gives a second proof
that $F\not\in\mathbb{Q}$. Moreover, parts (i) of Proposition~\ref%
{THM:altproductseries=SCF} and Theorem~\ref{THM:altproductseries=SCF} yield
the equivalent continued fractions 
\begin{align*}
F= \frac{1}{1+\displaystyle{\frac{1^2}{2+\displaystyle{\frac{3^2}{12+%
\displaystyle{\frac{15^2}{240+\cdot_{\displaystyle{\, \cdot}_{\displaystyle{%
\, \cdot}}}}}}}}}} \cong \frac{1}{1+\displaystyle{\frac{1}{2+\displaystyle{%
\frac{3}{4+\displaystyle{\frac{5}{16+\cdot_{\displaystyle{\, \cdot}_{%
\displaystyle{\, \cdot}}}}}}}}}}.
\end{align*}
Theorem~\ref{THM:altproductseries=SCF} does not yield $F\not\in\mathbb{A},$
but Duverney \cite{DuverneyUnpub} has proven it by other methods.
\end{example}

\begin{remark}
\emph{Non-}alternating series involving $F_n$ have also been studied. In $%
1963,$ Golomb \cite{Golomb2} proved that \emph{the sum $G:=\sum_{n=0}^{%
\infty}F_n^{-1}$ is irrational}. Two years later, Mahler~\cite{Mahler}
remarked that \emph{$G$ is in fact transcendental}, as a consequence of a
general theorem he proved in $1929$---see \cite[pp.~194--195]{DuverneyTransc}%
. (\emph{Mahler's method} \cite[\S 12.3]{DuverneyBook} proves the
transcendence of values, at certain algebraic points, of functions that
satisfy a type of functional equation.) Recently, Coons \cite{Coons} showed
that $G$ \emph{has irrationality measure} $\mu(G)=2$. In the pre-Mahler year 
$1916$, Kempner \cite{Kempner} proved that \emph{the number $\kappa:=
\sum_{n=0}^{\infty}(F_n-1)^{-1}=\sum_{n=0}^{\infty}2^{-2^n}$ is
transcendental}; see Adamczewski \cite{Adam} for five proofs with
interesting comments. (The second proof applies Mahler's method to the
function $f(x):=\sum_{n=0}^{\infty}x^{2^n},$ which is defined when $%
\left\vert x\right\vert < 1,$ satisfies the functional equation $%
f(x^2)=f(x)-x,$ and has the value $f(1/2)=\kappa$.)
\end{remark}

The next example shows that \emph{the sufficient condition for transcendence
of the sum of a type~II series in Theorem~\ref{THM:altproductseries=SCF}
does not extend to the more general type~I series in Proposition}~\ref%
{THM:altproductseries=SCF}.

\begin{example}
\label{EX:phi2} Let $(f_n)_{n\ge0}=1,1,2,3,5,8,13,\dotsc$ be the positive 
\emph{Fibonacci numbers} \cite[A000045]{Sloane}, defined by $f_0=1, f_1=1,$
and $f_{n+1}=f_n+f_{n-1}$ for $n\ge1$. The product $B_n:=f_nf_{n+1}$ is a 
\emph{golden rectangle number} \cite[A001654]{Sloane}. The difference
between successive golden rectangle numbers is a square: 
\begin{equation}  \label{EQ:square}
B_{n} -B_{n-1} = f_{n}f_{n+1}-f_{n-1}f_{n} = f_{n}(f_{n+1}-f_{n-1}) =
f_{n}^2.
\end{equation}
Therefore, using Proposition~\ref{THM:altproductseries=SCF}, part (i), and
cancelling common factors $f_1^2,f_2^2,\dotsc,$ we obtain the equivalence 
\begin{equation*}
\alpha:=\sum_{n=0}^{\infty}\frac{(-1)^{n}}{f_nf_{n+1}} \cong \frac{1}{f_0f_1+%
\displaystyle{\frac{f_0^2\cancel{f_1^2}}{\cancel{f_1^2}\displaystyle{+\frac{%
\cancel{f_1^2}\cancel{f_2^2}}{\cancel{f_2^2}+\cdot_{\displaystyle{\, \cdot}_{%
\displaystyle{\, \cdot}}}}}}}} = [0,1,1,1,\dotsc]. 
\end{equation*}
The latter is the simple continued fraction expansion of $\alpha
=\varphi^{-1}\in\mathbb{A}$. This shows that, \emph{given $%
B_0<B_1<B_2<\dotsb $ in $\mathbb{N},$ the sum of the series $%
\alpha:=\sum_{n=0}^{\infty}(-1)^nB_n^{-1}$ might not be transcendental, even
if the series is equivalent to a simple continued fraction}. (However, \emph{%
if in addition $B_{n-1}$ divides $B_{n}$ for all $n\ge1,$ then} $%
\alpha\not\in\mathbb{A},$ by Theorem~\ref{THM:altproductseries=SCF} with $%
A_0:=B_0$ and $A_n:=B_{n}/B_{n-1}$ for $n\ge1$.)
\end{example}

\begin{remark}
Example \ref{EX:phi2} is a special case of the following well-known fact. 
\emph{For any irrational number $\rho$ with simple continued fraction
expansion $\rho=[a_0,a_1,a_2,\dotsc]$ and $n$th convergent $p_n/q_n,$ there
is an equivalence} 
\begin{equation*}
\rho = a_0+\sum_{n=0}^{\infty}\frac{(-1)^{n}}{q_nq_{n+1}} \cong
[a_0,a_1,a_2,\dotsc] . 
\end{equation*}
(\emph{Proof.} Replacing $\rho$ with $\rho-a_0,$ we may assume that $a_0=0$.
Note that $q_0=1$. Setting $B_n=q_{n}q_{n+1},$ we use \eqref{EQ:qn} to get $%
B_{n} -B_{n-1} = a_{n+1}q_{n}^2,$ generalizing relation \eqref{EQ:square}.
The rest of the proof is like the argument in Example~\ref{EX:phi2}, and is
omitted.)
\end{remark}

By part (i) of Theorem~\ref{THM:altproductseries=SCF}, \emph{if the
continued fraction in \eqref{EQ:equiv} is simple for some $x_0,x_1,\dotsc,$
then the series in \eqref{EQ:equiv} is equivalent to a simple continued
fraction}, i.e., \eqref{EQ:SCF} holds. Conversely, it is not hard to show by
induction that, \emph{if \eqref{EQ:SCF} holds, then the continued fraction
in \eqref{EQ:equiv} is simple for some} $x_0,x_1,\dotsc$. For instance, if
the partial sums $A_0^{-1}$ and \mbox{$A_0^{-1}- (A_0A_1)^{-1}$} equal the
convergents $a_1^{-1}$ and $(a_1+ a_2^{-1})^{-1},$ respectively, then $%
A_0=a_1$ and \mbox{$(A_1-1)A_0^{-1}=a_2\in\mathbb{N}$}, so the choices $%
x_0=1 $ and $x_1=A_0^{-1}$ give the finite simple continued fraction 
\begin{equation*}
\frac{x_0}{A_0x_0+\displaystyle{\frac{A_0x_0x_1}{(A_1-\displaystyle{1)x_1}}}}
= \frac{1}{A_0+\displaystyle{\frac{1}{(A_1-\displaystyle{1)A_0^{-1}}}}} =
[0,a_1,a_2]. 
\end{equation*}

We now give a method for constructing all examples of Theorem~\ref%
{THM:altproductseries=SCF}, part (iii).\newline

\begin{corollary}
\label{COR:(iii)} \textrm{(i).} Construct a sequence of positive integers $%
(A_n)_{n\ge0}$ in three steps.

\noindent Step~1. Choose a sequence $(M_n)_{n\ge0}$ with all $M_n\in\mathbb{N%
}$.

\noindent Step~2. Let $(N_n)_{n\ge1}$ satisfy the recursion 
\begin{equation}  \label{EQ:N}
N_1=1,N_2=M_0, \ \text{ and }\ N_{n+2} =(M_nN_{n+1}+1)N_n \ \text{ for }\
n\ge1.
\end{equation}

\noindent Step~3. Define $(A_n)_{n\ge0}$ by 
\begin{equation}  \label{EQ:A}
A_0=M_0 \ \text{ and }\ A_n = M_nN_{n+1}+1\ \text{ for }\ n\ge1.
\end{equation}

Then there exists $(x_n)_{n\ge0}$ such that \eqref{EQ:equiv} is an
equivalence between an alternating series and a simple continued fraction,
namely, 
\begin{equation}  \label{EQ:AStoSCF}
\alpha := \sum_{n=0}^{\infty}\frac{(-1)^{n}}{A_0A_1\dotsb A_n} \cong
[0,M_0,M_1N_1,M_2N_2,M_3N_3,\dotsc].
\end{equation}

\noindent \textrm{(ii).} Conversely, if the continued fraction in %
\eqref{EQ:equiv} is simple for some $(x_n)_{n\ge0}$, then the sequence $%
(A_n)_{n\ge0}$ in \eqref{EQ:equiv} can be constructed by Steps 1, 2, 3.

\noindent \textrm{(iii).} The series in \eqref{EQ:AStoSCF} is the Pierce
expansion of $\alpha,$ that is, $A_{n+1}>A_n$ for $n\ge0$.

\noindent \textrm{(iv).} Distinct sequences $(M_n)_{n\ge0}\neq(M^{\prime
}_n)_{n\ge0}$ in Step~1 lead to distinct transcendental numbers $%
\alpha\neq\alpha^{\prime }$ in \eqref{EQ:AStoSCF}. In particular, if $%
\mathbb{S}$ denotes the set of real numbers $\alpha$ whose Pierce expansion
is equivalent to a simple continued fraction, then $\#\mathbb{S}%
=\aleph_0^{\aleph_0}=\mathfrak{c}$.
\end{corollary}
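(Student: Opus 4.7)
The plan is to recognize that the recursion \eqref{EQ:N} is secretly the classical convergent-denominator recursion $q_{n+1} = a_{n+1} q_n + q_{n-1}$ for the simple continued fraction $[0, M_0, M_1 N_1, M_2 N_2, \dotsc]$ in disguise. Concretely, I will establish the identifications $a_1 = M_0$, $a_{n+1} = M_n N_n$ for $n \ge 1$, and $q_n = N_{n+1}$ for $n \ge 0$. Granting these, \eqref{EQ:N} unwinds as $N_{n+2} = M_n N_n\cdot N_{n+1} + N_n = a_{n+1} q_n + q_{n-1} = q_{n+1}$, the formula $A_n = M_n N_{n+1} + 1$ rearranges to $A_n = q_{n+1}/q_{n-1}$, and a short induction yields $q_n q_{n+1} = A_0 A_1 \dotsb A_n$.

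For part (i), I will verify $q_n = N_{n+1}$ by induction on $n$ (base cases $q_0 = 1 = N_1$ and $q_1 = M_0 = N_2$), then combine the classical identity $p_{n+1}/q_{n+1} - p_n/q_n = (-1)^n/(q_n q_{n+1})$ with $p_0/q_0 = 0$ to obtain by telescoping
$$\frac{p_{n+1}}{q_{n+1}} \;=\; \sum_{i=0}^{n} \frac{(-1)^i}{A_0 A_1 \dotsb A_i},$$
which is exactly \eqref{EQ:SCF}. Existence of the $x_n$ that realize this simple continued fraction inside \eqref{EQ:equiv} then follows from the converse statement in the paragraph immediately preceding Corollary \ref{COR:(iii)}.

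For part (ii), I run the argument in reverse: assuming \eqref{EQ:SCF}, the identities $q_n q_{n+1} = A_0 A_1 \dotsb A_n$ and $q_{n-1} \mid a_{n+1}$ established in the proof of Theorem \ref{THM:altproductseries=SCF}(iii) allow me to define $M_0 := A_0$ and $M_n := a_{n+1}/q_{n-1} \in \mathbb{N}$ for $n \ge 1$, and to check that \eqref{EQ:N} and \eqref{EQ:A} then hold by the same computation read backwards. Part (iii) is immediate from the explicit formulas: $A_1 = M_0 M_1 + 1 > M_0 = A_0$, and for $n \ge 1$, using $N_{n+2} = A_n N_n$ together with $M_{n+1} \ge 1$ and $N_n \ge 1$,
$$A_{n+1} \;=\; M_{n+1} N_{n+2} + 1 \;\ge\; N_{n+2} + 1 \;=\; A_n N_n + 1 \;>\; A_n.$$

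For part (iv), if two sequences $(M_n)$ and $(M'_n)$ first differ at index $k$, then $N_j = N'_j$ for $j \le k+1$ (each $N_{j+2}$ depends only on $M_0, \dotsc, M_j$ and $N_1, \dotsc, N_{j+1}$), so the two simple continued fractions in \eqref{EQ:AStoSCF} agree through the $k$th partial quotient but have different partial quotients $M_k N_{k+1} \ne M'_k N_{k+1}$ at position $k+1$; uniqueness of the simple continued fraction expansion of an irrational number (indeed a transcendental one, by Theorem \ref{THM:altproductseries=SCF}(iii)) then forces $\alpha \ne \alpha'$. This injects $\mathbb{N}^{\mathbb{N}}$ into $\mathbb{S}$, giving $\#\mathbb{S} \ge \aleph_0^{\aleph_0} = \mathfrak{c}$, while $\mathbb{S} \subseteq \mathbb{R}$ supplies the reverse inequality. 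The main obstacle is spotting the identification $q_n = N_{n+1}$; once it is in hand, all four parts reduce to bookkeeping against the classical convergent recursion.
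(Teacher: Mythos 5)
Your argument is correct, but it travels a genuinely different road from the paper's. The paper works directly with the transformed continued fraction in \eqref{EQ:equiv}: it characterizes simplicity by four explicit conditions on the $x_n$ ($x_0=1$, $A_nx_nx_{n+1}=1$, $A_0x_0\in\mathbb{N}$, $(A_n-1)x_n\in\mathbb{N}$), exhibits $x_n=N_n/N_{n+1}$ for part (i), and for part (ii) writes $x_n=N_n/D_n$ in lowest terms and runs a divisibility argument to conclude $D_n=N_{n+1}$. You instead identify $N_{n+1}$ with the convergent denominator $q_n$ of $[0,M_0,M_1N_1,M_2N_2,\dotsc]$, so that \eqref{EQ:N} becomes the recursion $q_{n+1}=a_{n+1}q_n+q_{n-1}$ and \eqref{EQ:AStoSCF} follows by telescoping the identity $p_{i+1}/q_{i+1}-p_i/q_i=(-1)^i/(q_iq_{i+1})$ against $q_iq_{i+1}=A_0\dotsb A_i$; your part (ii) then reuses the relations $q_{n-1}\mid a_{n+1}$ and $q_nq_{n+1}=A_0\dotsb A_n$ already extracted in the proof of Theorem~\ref{THM:altproductseries=SCF}(iii), which is arguably cleaner than the paper's gcd bookkeeping. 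What your route buys is a transparent explanation of \emph{why} the recursion \eqref{EQ:N} looks the way it does; what it costs is that in part (i) you never actually exhibit the $(x_n)$ demanded by the statement, deferring instead to the converse asserted (but only sketched, via one worked instance) in the paragraph before the corollary. That reliance is easily discharged from your own identifications: take $x_0=1$ and $x_n=q_{n-1}/q_n=N_n/N_{n+1}$, so that $A_nx_nx_{n+1}=(N_{n+2}/N_n)(N_n/N_{n+1})(N_{n+1}/N_{n+2})=1$ and $(A_n-1)x_n=M_nN_n\in\mathbb{N}$ --- which is exactly the paper's choice --- and part (i) is complete without any appeal to the sketched converse. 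Parts (iii) and (iv) match the paper's in substance.
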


\begin{proof}
By definition, the continued fraction in \eqref{EQ:equiv} is simple if, and only if,
\begin{align*}
&{\rm(a).}\ x_0=1,\\
&{\rm(b).}\ A_nx_nx_{n+1}=1 \ \ \text{ for } \ n\ge0,\\
&{\rm(c).}\ A_0x_0\in\mathbb{N}, \ \text{ and}\\
&{\rm(d).}\ (A_n-1)x_n\in\mathbb{N} \ \text{ for } \ n\ge1.
\end{align*}

\noindent (i). Set $ x_0=1$ and $x_n=N_n/N_{n+1}$ for $n\ge1$. From formulas \eqref{EQ:A} and \eqref{EQ:N} we get $A_n=N_{n+2}/N_n$ for $n\ge1$. It is now easy to verify (a), (b), (c), and (d). Observing that $(A_n-1)x_n = M_nN_n$ for $n\ge1,$ the equivalence \eqref{EQ:equiv} gives \eqref{EQ:AStoSCF}. This proves (i).

\noindent (ii). Assume (a), (b), (c), and (d). Then $A_n\in\mathbb{N}$ implies that $x_n\in\mathbb{Q}$ for $n\ge1,$ so $x_n = N_n/D_n,$
where $N_n\in\mathbb{N}$ and $D_n\in\mathbb{N},$ with $\gcd(N_n,D_n)=1$. From (a) and (b), we get $N_1=1$ and $D_1=A_0$. From (d), we see that $D_n\mid (A_n-1)$ for $n\ge1,$ so there exists $M_n\in\mathbb{N}$ such that $A_n =M_nD_n+1$.
Since (b) implies \mbox{$A_nN_nN_{n+1}=D_nD_{n+1}$}, we get
\begin{equation} \label{EQ:DENNDD}
(M_nD_n+1)N_nN_{n+1}=D_nD_{n+1} \ \text{ for } \ n\ge1.
\end{equation}
Consequently, $N_{n+1}\mid D_nD_{n+1},$ so $N_{n+1}\mid D_n$. Also, $D_n\mid (M_nD_n+1)N_nN_{n+1},$ so $D_n\mid N_{n+1}$. Thus $D_n=N_{n+1}$ for all $n\ge1$; in particular, $N_2=D_1=A_0$. Making replacements in \eqref{EQ:DENNDD} and in $A_n =M_nD_n+1,$ we obtain \eqref{EQ:N} and \eqref{EQ:A}, respectively. This proves (ii).
 
\noindent (iii). Note that \eqref{EQ:N} and \eqref{EQ:A} give $A_{n+1} = M_{n+1}A_nN_n+1>A_n$ for $n\ge0$.

\noindent (iv). By Theorem~\ref{THM:altproductseries=SCF}, the sum $\alpha$ is transcendental. It now suffices to show that, given $\alpha=[0,M_0,M_1N_1,M_2N_2,\dotsc]$ and $\alpha'=[0,M'_0,M'_1N'_1,M'_2N'_2,\dotsc],$ if $\alpha=\alpha',$ then 
$M_n=M'_n$ for all~$n\ge0$. By the uniqueness of simple continued fraction expansion, $M_0=M'_0$ and $M_kN_k=M'_kN'_k$ for $k\ge1$. Using \eqref{EQ:N}, the rest of the proof is an easy induction, which we omit. This completes the proof of the corollary.
\end{proof}

\begin{example}
Choosing the constant sequence $M_n=1$ yields $N_1=1,N_2=1,$ and %
\mbox{$N_{n+2}=(N_{n+1}+1)N_n$} for $n\ge1$. Then $A_0=1$ and $A_n =
N_{n+1}+1$ for $n\ge1,$ so 
\begin{equation*}
A_n = 1,2, 3, 4, 9, 28, 225, 6076, 1361025, \dotsc 
\end{equation*}
(see \cite[A007704]{Sloane}). By (iv), we recover the transcendence of the 
\emph{Davison-Shallit constant} \cite[Example~A]{SD} (see also \cite[%
pp.~436, 445]{Finch}, \cite[A242724]{Sloane}) 
\begin{align*}
D: =&\ \sum_{n=0}^{\infty}\frac{(-1)^{n}}{A_0A_1\dotsb A_n} = 1-\frac12+%
\frac{1}{6}-\frac{1}{24}+\frac{1}{216}-\dotsb = 0.62946502045\dotso
\end{align*}
and, by (i), the expansion \cite[p.~122]{SD}, \cite[A006277]{Sloane} 
\begin{equation*}
D= [0,1,N_1,N_2,N_3,\dotso] = [0,1,1,1,2,3,8,27,224,6075,1361024,\dotsc]. 
\end{equation*}
\end{example}

\begin{example}
\label{EX:lambda} Let us define an \emph{alternating Liouville constant} by
the series 
\begin{align*}
\lambda := \sum_{n=2}^{\infty} \frac{(-1)^n}{10^{n!}} &= \frac{1}{10^2} - 
\frac{1}{10^6} + \frac{1}{10^{24}} - \frac{1}{10^{120}} + \dotsb \\
&=0.009999000000000000000000\underbrace{99\dotso9}_{96}00\dotso.
\end{align*}
For $n=1,2,3,\dotsc,$ the $n$th partial sum of the series satisfies 
\begin{align*}
\frac{P_n}{Q_n} := \sum_{k=2}^{n+1} \frac{(-1)^{k}}{10^{k!}} \implies 0 <
\left\vert \lambda - \frac{P_n}{Q_n} \right\vert < \frac{1}{10^{(n+2)!}} = 
\frac{1}{Q_n^{n+2}}.
\end{align*}
From this and \eqref{EQ:mu def}, we infer that $\lambda$ has irrationality
measure $\mu(\lambda) = \infty$. By definition, $\lambda$ is therefore a 
\emph{Liouville number}, so \emph{Liouville's theorem} \cite[\S 1.4]{BPSZ}, 
\cite[\S 9.3]{DuverneyBook}, \cite[\S 11.7]{HW} (or its descendant, the
Thue-Siegel-Roth theorem) implies $\lambda$ is transcendental.

On the other hand, its Pierce expansion 
\begin{align}  \label{EQ:lambda}
\lambda = \sum_{n=0}^{\infty}\frac{(-1)^{n}}{A_0A_1\dotsb A_n} = \frac{1}{%
10^{2!}} - \frac{1}{10^{2!}10^{3!-2!}} + \frac{1}{10^{2!}10^{3!-2!}10^{4!-3!}%
} - \dotsb
\end{align}
cannot be constructed from any sequence $(M_n)_{n\ge0}$ as in (i). (\emph{%
Proof}. If it could, then $M_0=A_0=10^{2!}$ would imply $%
M_110^{2!}+1=A_1=10^{3!-2!} $, contradicting $M_1\in\mathbb{N}$.)

Hence by (ii) a converse to Theorem~\ref{THM:altproductseries=SCF}, part
(iii), weaker than the false converse in Example~\ref{EX:e}, is also not
true. Namely, \emph{although $\lambda\not\in\mathbb{A}$ and $%
\mu(\lambda)\ge2.5$, the type~II series for $\lambda$ in \eqref{EQ:lambda}
is not equivalent to a simple continued fraction}.

More positively, one can show that, \emph{if a sequence $(M_n)_{n\ge0}$ in 
\textrm{(i)} grows sufficiently rapidly, then the sum $\alpha$ in the
equivalence \eqref{EQ:AStoSCF} is a Liouville number.}
\end{example}

The next section gives further applications of Theorem \ref%
{THM:altproductseries=SCF}.


\section{Sylvester's sequence and Cahen's constant.}

\label{SEC:Sylvester} 

There are not many ``naturally-occurring'' transcendental numbers for which
the simple continued fraction is known explicitly. They include the
beautiful expansions\newline

\begin{align*}
e-1&= [1,1,2,1,1,4,1,1,6,1,1,8,1,1,10,1,1,12,1,1,14,\dotsc], \\
\tan1&=[1,1,1,3,1,5,1,7,1,9,1,11,1,13,1,15,1,17,1,19,\dotsc], \\
1/\tanh1&=[1,3,5,7,9,11,13,15,17,19,21,23,25,27,29,31,33,\dotsc], \\
I_0(2)/I_1(2) &= [1,2,3,4,5,6,7,8,9,10,11,12,13,14,15,16,17,18,\dotsc],
\end{align*}
and those of $e^{2/q}, \tan\frac1q,\tanh\frac1q,$ and $I_{\frac{p}{q}%
}(\frac2q)/ I_{1+\frac{p}{q}}(\frac2q)$, for $p$ and $q$ in $\mathbb{N},$
where $I_c(x)$ is a modified, or hyperbolic, Bessel function of the first
kind \cite[Chapter~3]{DuverneyBook}. References to several others are given
in \cite[\S V]{SD}.

Theorem~\ref{THM:altproductseries=SCF} yields a doubly-infinite family of
such numbers. We define them by a natural recursion, independently of
Corollary~\ref{COR:(iii)}.

\begin{corollary}
\label{COR:SCFforCgen} Fix $k\in\mathbb{N}$ and $\ell\in\mathbb{N}$. For $%
n\ge0,$ define $s_n=s_n(k,\ell)$ by the recurrence 
\begin{equation}  \label{EQ:Sylvester_k}
s_0=k \ \text{ and } \ s_{n}=(s_0s_1\dotsb s_{n-1})^{\ell}+1 \ \text{ for }
\ n\ge1.
\end{equation}

\noindent\textrm{(i).} Then there is an equivalence 
\begin{align*}
C_{k,\ell}:=\sum_{n=0}^{\infty}\frac{(-1)^n}{s_{n+1}-1} \cong\
[a_0,a_1,a_2,\dotsc],
\end{align*}
where the partial quotients of the simple continued fraction are 
\begin{equation*}
\label{EQ:a_n} a_0=0,\ a_1=s_0^{\mspace{1mu}\ell},\ \text{ and }\
a_{n+1}=(s_n^{\ell}-1)\prod_{i=0}^{n-1}(s_i^{\ell})^{(-1)^{n+i}}\in\mathbb{N}%
\ \text{ for }\ n\ge1. 
\end{equation*}

\noindent\textrm{(ii).} The sum $C_{k,\ell}$ is transcendental, and $%
C_{k,\ell}=C_{k^{\prime },\ell^{\prime }}$ only when $(k,\ell)=(k^{\prime
},\ell^{\prime })$.

\noindent\textrm{(iii).} The double-exponential lower bound $a_n >
(k^{\ell}+1)^{(\ell+1)^{n-4}}$ holds for all $n\ge4$.

\noindent\textrm{(iv).} There are the summations 
\begin{align*}
\sum_{n=0}^{\infty} \frac{s_n^{\ell}-1}{s_{n+1}-1} = 1 \qquad \text{and}
\qquad \sum_{n=0}^{\infty} \frac{s_{2n+1}^{\ell}-1}{s_{2n+2}-1} &=
C_{k,\ell}.
\end{align*}

\noindent\textrm{(v).} Taking $\ell=1$ gives 
\begin{align*}
C_{k,1}=&\ \frac{1}{s_0} - \frac{1}{s_0s_1} + \frac{1}{s_0s_1s_2} - \frac{1}{%
s_0s_1s_2s_3} + \frac{1}{s_0s_1s_2s_3s_4} - \frac{1}{s_0s_1s_2s_3s_4s_5} +
\dotsb \\
\cong&\
[0,s_0,1,(s_0)^{2},(s_1)^{2},(s_0s_2)^{2},(s_1s_3)^{2},(s_0s_2s_4)^{2},
(s_1s_3s_5)^{2}, \dotsc].
\end{align*}

\noindent\textrm{(vi).} For odd $n\ge1$ and even $m\ge2,$ the partial
quotients $a_n$ and $a_m$ of $C_{k,1}$ are coprime.
\end{corollary}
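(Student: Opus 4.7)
The plan is to reduce the coprimality claim to pairwise coprimality of the Sylvester-type sequence $(s_n) = (s_n(k,1))$, using the explicit formulas for the partial quotients displayed in part~(v). Reading off the pattern $[0, s_0, 1, s_0^2, s_1^2, (s_0 s_2)^2, (s_1 s_3)^2, (s_0 s_2 s_4)^2, \ldots]$, I would first record that, for $j \geq 1$,
\[
a_{2j+1} = (s_0 s_2 \cdots s_{2j-2})^2 \quad\text{and}\quad a_{2j+2} = (s_1 s_3 \cdots s_{2j-1})^2,
\]
together with the edge values $a_1 = s_0$ and $a_2 = 1$. The key structural point is then: every odd-indexed $a_n$ is a product of \emph{even}-indexed members of $(s_i)$, while every even-indexed $a_m$ is a product of \emph{odd}-indexed members, and these two index sets of $s$-subscripts are disjoint.

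The arithmetic step is to show the sequence $(s_n)$ itself is pairwise coprime. With $\ell = 1$, the recurrence \eqref{EQ:Sylvester_k} gives $s_j = s_0 s_1 \cdots s_{j-1} + 1$; hence for $i < j$ the factor $s_i$ divides the product $s_0 s_1 \cdots s_{j-1}$, so $s_j \equiv 1 \pmod{s_i}$ and $\gcd(s_i, s_j) = 1$. This is the classical Sylvester/Euclid coprimality argument, already employed in the paper for the Fermat numbers in Example~\ref{EX: Fermat2}.

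Combining the two observations finishes the proof: any prime $p$ dividing both $a_n$ (with $n$ odd) and $a_m$ (with $m$ even) would have to divide some $s_i$ with $i$ even and some $s_j$ with $j$ odd, and $i \neq j$ contradicts pairwise coprimality; the case $a_2 = 1$ is trivial. I don't foresee any real obstacle—the argument is essentially a bookkeeping consequence of (v) plus the standard coprimality trick—but the one spot calling for a little care is correctly handling the edge/base cases ($a_1 = s_0$ and $a_2 = 1$, corresponding to empty or singleton products) so that the two displayed formulas genuinely capture \emph{all} odd- and even-indexed partial quotients.
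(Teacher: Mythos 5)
Your argument for part (vi) is correct and is essentially the paper's own proof, which likewise derives $\gcd(s_i,s_j)=1$ for $i\neq j$ from the recursion \eqref{EQ:Sylvester_k} and then reads off from the explicit partial quotients in (v) that odd-indexed $a_n$ involve only even-indexed $s_i$ and even-indexed $a_m$ only odd-indexed $s_j$. Your version merely spells out the bookkeeping (including the edge cases $a_1=s_0$ and $a_2=1$) that the paper leaves implicit.
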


\begin{proof}
\noindent(i).
Set $A_n:=s_n^{\ell}$ for $n\ge0$. Then \eqref{EQ:Sylvester_k} gives $s_{n+1}-1 = A_0A_1\dotsb A_n,$
so by Theorem~\ref{THM:altproductseries=SCF}, for any $x_0,x_1,\dotsc$ in $\mathbb{R^+}$ there is an equivalence
\begin{equation*}
C_{k,\ell} =\sum_{n=0}^{\infty}\frac{(-1)^{n}}{s_0^{\ell}s_1^{\ell}\dotsb s_n^{\ell}}
 \cong \frac{x_0}{s_0^{\ell}x_0+\displaystyle{\frac{s_0^{\ell}x_0x_1}{(s_1^{\ell}-1)x_1+\displaystyle{\frac{s_1^{\ell}x_1x_2}{(s_2^{\ell}-1)x_2+\displaystyle{ \cdot_{\displaystyle{\, \cdot}_{\displaystyle{\, \cdot}}}}}}}}}.
\end{equation*}
The partial numerators equal $1$ when $x_0=1$ and $x_{n+1}=(s_{n}^{\ell}x_{n})^{-1}$ for $n\ge0$. By induction, the solution of this recursion is
\begin{equation*} \label{EQ:x_n}
x_n=\prod_{i=0}^{n-1}(s_i^{\ell})^{(-1)^{n+i}}\ \text{ for } n\ge1.
\end{equation*}
The partial quotients are then $a_0=0,\ a_1=s_0^{\ell}x_0=s_0^{\ell},$ and $a_{n+1}=(s_n^{\ell}-1)x_n$ for $n\ge1$. Substituting $s_n^{\ell}-1=(s_0^{\ell}s_1^{\ell}\dotsb s_{n-1}^{\ell}+1)^{\ell}-1$ and expanding the binomial, the $1$s cancel, so $s_0^{\ell}s_1^{\ell}\dotsb s_{n-1}^{\ell}$ divides $s_n^{\ell}-1$ and $a_{n+1}\in\mathbb{N}$. This proves (i).

\noindent(ii).
Theorem~\ref{THM:altproductseries=SCF} and (i) imply $C_{k,\ell}\not\in\mathbb{A}$. From $a_1=k^{\ell}$ and $a_2=((k^{\ell}+1)^{\ell}-1)k^{-\ell},$ 
we deduce that $C_{k,\ell}\neq C_{k',\ell'}$ when $(k,\ell)\neq(k',\ell')$. This proves (ii).

\noindent(iii). Let $\alpha_n:=s_n-1$. Then \eqref{EQ:Sylvester_k} implies
$\alpha_{n+1} = \alpha_n(\alpha_n+1)^{\ell} > \alpha_n^{\ell+1}$ for $n\ge1$. As $\alpha_2 = k^{\ell}(k^{\ell}+1)^{\ell} \ge k^{\ell}+1,$ induction yields $\alpha_n \ge  (k^{\ell}+1)^{(\ell+1)^{n-2}}$ for $n\ge2$. Since (i) implies $a_n \ge s^{2\ell}_{n-3} > \alpha^{2\ell}_{n-3}\ge \alpha^{\ell+1}_{n-3},$ 
we get (iii).

\noindent(iv). For $n>0,$ definition \eqref{EQ:Sylvester_k} implies $s_{n+1}-1 = (s_n-1)s_n^{\ell},$ so
\begin{equation} \label{EQ:oddeven}
   \frac{1}{s_{n}-1} - \frac{1}{s_{n+1}-1} = \frac{s_n^{\ell}-1}{s_{n+1}-1}  \ \text{ for } \ n\ge1.
\end{equation}
Hence the first series in (iv) telescopes to $(s_0^\ell-1)(s_1-1)^{-1} + (s_1-1)^{-1}=1$.
Replacing $n$ with $2n+1$ in \eqref{EQ:oddeven}, we sum from $n=0$ to $\infty$ and obtain the second equality in (iv).

\noindent{\rm (v).} Set $\ell=1$ in parts (i) and (ii).

\noindent{\rm (vi).} Recursion \eqref{EQ:Sylvester_k} yields $\gcd(s_i,s_j)=1$ for $i\neq j,$ so (ii) follows from (i).
This completes the proof of the corollary.
\end{proof}

\begin{example}
\label{EX:SylCahen} Take $(k,\ell)=(1,1)$. \emph{Sylvester's sequence} \cite%
{Sylvester, Sylvester2} is defined as 
\begin{equation*}
(S_n)_{n\ge0} :=(s_{n+1}(1,1))_{n\ge0}= 2, 3, 7, 43, 1807, 3263443,
10650056950807,\dotsc \label{EQ: SS} 
\end{equation*}
(see \cite[p.~123]{SD}, \cite[pp.~436, 444]{Finch}, \cite{Golomb}, \cite[%
A000058]{Sloane}). Sylvester's sequence satisfies the recursion $S_0=2$ and $%
S_{n+1} = (S_n-1)S_n +1$ for $n\ge0$.

Likewise, $C:=C_{1,1}$ defines \emph{Cahen's constant} \cite{Cahen}, \cite[%
\S 6.7]{Finch}, \cite[A118227]{Sloane} 
\begin{align*}
C&=\sum_{n=0}^{\infty}\frac{(-1)^{n}}{S_n-1}=1-\frac12+\frac16-\frac{1}{42}+%
\frac{1}{1806}-\dotsb=0.643410546288338\dotso \\
&=1-\sum_{n=1}^{\infty}\frac{(-1)^{n-1}}{S_0S_1\dotsb S_{n-1} } = 1-\frac12+%
\frac{1}{2\cdot3}-\frac{1}{2\cdot3\cdot7}+\frac{1}{2\cdot3\cdot7\cdot43}%
-\dotsb.
\end{align*}
Corollary~\ref{COR:SCFforCgen} recovers $C\not\in\mathbb{A}$ from \cite{SD}
and gives the expansion \cite[A006279]{Sloane} 
\begin{align}  \label{EQ:SCFforC}
C&= [0,1,1,1,(S_0)^{2},(S_1)^2,(S_0S_2)^{2},(S_1S_3)^{2},
(S_0S_2S_4)^{2},(S_1S_3S_5)^{2}, \dotsc] \\
&=[0,\,1,\,1,\,1,\,2^2,\,3^2,\,14^2,\, 129^2,\, 25298^2,\,420984147^2,\,%
\dotso].  \notag
\end{align}
Since $\alpha_{n}:=S_{n}-1$ satisfies $\alpha_{n+1}-\alpha_n=\alpha_n^2$ and 
$\sum_{n=0}^{\infty}(-1)^n\alpha_n^{-1}=C,$ Proposition~\ref%
{THM:altproductseries=CF} and Theorem~\ref{THM:altproductseries=SCF} give,
respectively, the continued fractions 
\begin{equation*}
C = \frac{1}{1+\displaystyle{\frac{1^2}{1^2+\displaystyle{\frac{2^2}{2^2+%
\displaystyle{\frac{6^2}{6^2+\displaystyle{\frac{42^2}{42^2+\cdot_{%
\displaystyle{\, \cdot}_{\displaystyle{\, \cdot}}}}}}}}}}}} \cong\frac{1}{1+%
\displaystyle{\frac{1}{1+\displaystyle{\frac{2}{2+\displaystyle{\frac{3}{6+%
\displaystyle{\frac{7}{42+\cdot_{\displaystyle{\, \cdot}_{\displaystyle{\,
\cdot}}}}}}}}}}}}. 
\end{equation*}
\end{example}

In his $1891$ paper ``A remark on an expansion of numbers which has some
similarities with continued fractions'' \cite{Cahen}, Cahen defined $C$ and
showed that it is irrational. Exactly $100$ years later, as an example of
their ``self-similar'' (or ``self-generating'' \cite[\S 6.7]{Finch}, \cite[%
\S 6.7]{FinchErradd}) simple continued fractions, Davison and Shallit \cite%
{SD} proved that $C$ is transcendental and that $C=[0,1,q_0^2,q_1^2,q_2^2,%
\dotso]$. (This expansion agrees with \eqref{EQ:SCFforC}, by \eqref{EQ:qn}
and induction.) For generalizations of \cite{SD}, see Becker \cite{Becker}
and T\"{o}pfer \cite{Topfer}.

\begin{example}
\label{EX:Cahen} Corollary \ref{COR:SCFforCgen} shows that the Cahen-type
constant $C_{k,1} = [0, k, 1,\dotso],$ so 
\begin{equation*}
1 > C_{1,1} > \frac12 >C_{2,1} > \frac13 > C_{3,1} > \frac14 > C_{4,1} >
\dotsb. 
\end{equation*}
When $k=2$ we have $s_0(2,1)=2=s_1(1,1)=S_0$. It follows that in general $%
s_{n+1}(2,1)=s_{n+2}(1,1)=S_{n+1},$ so 
\begin{equation*}
C_{2,1}=\sum_{n=0}^{\infty}\frac{(-1)^{n}}{s_{n+1}(2,1)-1} =
\sum_{n=0}^{\infty}\frac{(-1)^{n}}{S_{n+1}-1} = 1-\sum_{n=0}^{\infty}\frac{%
(-1)^{n}}{S_n-1}=1-C. 
\end{equation*}
By Corollary \ref{COR:SCFforCgen}, 
\begin{align*}
C_{2,1}=[0,\,2,\,1,\,2^2,\,3^2,\,14^2,\,129^2,\,25298^2,\,420984147^2,\,%
\dotsc]\not\in\mathbb{A}.
\end{align*}
\end{example}

\begin{example}
For an example with $\ell >1,$ we take $(k,\ell)=(1,2)$ to get 
\begin{equation*}
(s_{n+1}(1,2))_{n\ge0}=2, 5, 101, 1020101, 1061522231810040101,\dotsc 
\end{equation*}
(see \cite{Shirali} and \cite[A231830]{Sloane}). Then $C_{1,2}$ is the
transcendental number 
\begin{align*}
C_{1,2}=\ &1 - \frac{1}{2^2} + \frac{1}{2^2\cdot5^2} - \frac{1}{%
2^2\cdot5^2\cdot101^2} + \dotsb \\
=\ & 1 - \frac{1}{4} + \frac{1}{100} - \frac{1}{1020100}+\dotsb
=0.759999019703\dotso.
\end{align*}
Here $\alpha_n:= s_{n+1}(1,2)-1$ satisfies $\alpha_{n+1}-\alpha_n =
\alpha_n^2(\alpha_n+2),$ so Proposition~\ref{THM:altproductseries=CF},
Theorem~\ref{THM:altproductseries=SCF}, and Corollary~\ref{COR:SCFforCgen}
give the continued fractions 
\begin{align*}
C_{1,2}&=\frac{1}{1+\displaystyle{\frac{1^2}{1^2\cdot3+\displaystyle{\frac{%
4^2}{4^2\cdot6+\displaystyle{\frac{100^2}{100^2\cdot102+\cdot_{\displaystyle{%
\, \cdot}_{\displaystyle{\, \cdot}}}}}}}}}} \cong \frac{1}{1+\displaystyle{%
\frac{1}{3+\displaystyle{\frac{4}{24+\displaystyle{\frac{25}{10200+\cdot_{%
\displaystyle{\, \cdot}_{\displaystyle{\, \cdot}}}}}}}}}} \\
&\cong
[0,1^2,2^2-1,(5^2-1)2^{-2},(101^2-1)2^25^{-2},(1020101^2-1)2^{-2}5^2101^{-2},
\\
&\quad\ \ \ (1061522231810040101^2-1)2^{2}5^{-2}101^{2}1020101^{-2},\dotsc]
\\
&= [0,\,1,\,3,\,6,\,1632,\,637563750,\, 1767398865801083661443214432,
\dotsc].
\end{align*}
\end{example}

Our final section studies series of \emph{positive} terms involving
Sylvester-type sequences.


\section{Some non-alternating series.}

\label{Kellogg and Curtiss}

Another series formed from Sylvester's sequence is the sum of reciprocals.
Setting $(k,\ell)=(1,1)$ in \eqref{EQ:oddeven}, the right-hand side is then $%
S_n^{-1},$ so the series telescopes to 
\begin{equation}  \label{EQ:telescope1}
\sum_{n=0}^{\infty}\frac{1}{S_n}= \sum_{n=0}^{\infty}\left(\frac{1}{S_n-1} - 
\frac{1}{S_{n+1}-1}\right)= \frac{1}{S_0-1} = 1,
\end{equation}
a rational number. By contrast, the corresponding alternating sum 
\begin{equation*}
\sum_{n=0}^{\infty}\frac{(-1)^{n}}{S_n} = \sum_{n=0}^{\infty}\frac{(-1)^{n}}{%
S_n-1} - \sum_{n=0}^{\infty}\frac{(-1)^{n}}{S_{n+1}-1} = C-(1-C) =2C-1 
\end{equation*}
is transcendental, as are the non-alternating sums 
\begin{align}  \label{EQ:greedyC}
\sum_{n=0}^{\infty}\frac{1}{S_{2n}}&=\sum_{n=0}^{\infty}\left(\frac{1}{%
S_{2n}-1}-\frac{1}{S_{2n+1}-1}\right)=C
\end{align}
and $\sum_{n=0}^{\infty}S_{2n+1}^{-1}= 1-C$.

Finch asked, ``What can be said about $\sum_{n=0}^{%
\infty}(S_n-1)^{-1}=1.6910302067\dotso$?'' \cite[p.~436]{Finch}. We denote
this constant by 
\begin{equation*}
K:=\sum_{n=0}^{\infty}\frac{1}{S_n-1} = 1 + \sum_{n=1}^{\infty}\frac{1}{%
S_0S_1\dotsb S_{n-1}} 
\end{equation*}
and we name it the \emph{Kellogg-Curtiss constant}, because Kellogg
conjectured \cite{Kellogg}, and Curtiss proved \cite{Curtiss}, the following
bound on solutions to a unit fraction equation: 
\begin{equation*}
x_i\in\mathbb{N}\ \text{ and }\ \sum_{i=0}^n\frac{1}{x_i} = 1 \quad \implies
\quad \max_{0\le i\le n} x_i\le S_n-1. 
\end{equation*}

\begin{remark}
By \eqref{EQ:telescope1}, one solution of the equation $\sum_{i=0}^{%
\infty}x_i^{-1} = 1$ is $x_i=S_i$. In fact, this is the solution provided by
the ``greedy Egyptian fraction algorithm''---see Soundararajan \cite{Sounda}%
. Likewise, the greedy Egyptian fraction expansion of Cahen's constant~$C$
is series \eqref{EQ:greedyC} with $x_i=S_{2i}$.
\end{remark}

The following general result shows in particular that $K$ is irrational.

\begin{proposition}
\label{PROP:K_k} For $k\in\mathbb{N}$ and $\ell\in\mathbb{N},$ define the 
\emph{Kellogg-Curtiss-type constant} 
\begin{equation*}
\label{EQ:K_k} K_{k,\ell}:=\sum_{n=0}^{\infty}\frac{1}{s_{n+1}(k,\ell)-1}, 
\end{equation*}
where the Sylvester-type sequence $(s_n(k,\ell))_{n\ge0}$ is defined in
Corollary~\ref{COR:SCFforCgen}.

\noindent\textrm{(i).} Then $K_{k,\ell}\not\in\mathbb{Q}$. In particular,
the Kellogg-Curtiss constant $K=K_{1,1}=1+K_{2,1}$ is irrational.

\noindent\textrm{(ii).} If $\ell\ge2,$ then $K_{k,\ell}$ is transcendental
and $\mu(K_{k,\ell})\ge3$.
\end{proposition}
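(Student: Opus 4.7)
The plan is to use the partial sums of the defining series for $K_{k,\ell}$ as a sequence of exceptionally accurate rational approximations, and then to invoke the Thue--Siegel--Roth theorem. The algebraic engine is the identity
\[
s_{n+1}-1=(s_0s_1\cdots s_n)^{\ell}=(s_n-1)\,s_n^{\ell}\qquad(n\ge 1),
\]
which is immediate from \eqref{EQ:Sylvester_k} and forces $(s_n-1)\mid(s_{n+1}-1)$. Setting $Q_N:=s_N-1=(s_0s_1\cdots s_{N-1})^{\ell}$ and
\[
\sigma_N:=\sum_{n=0}^{N-1}\frac{1}{s_{n+1}-1}=\frac{P_N}{Q_N},
\]
the divisibility just noted guarantees $P_N\in\mathbb{N}$.

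Next I would estimate the tail $T_N:=K_{k,\ell}-\sigma_N>0$. Using the identity again, consecutive summands shrink by a factor of at least $s_{n+1}^{\ell}\ge 4$, so the tail is dominated by twice its leading term:
\[
0<T_N<\frac{2}{s_{N+1}-1}=\frac{2}{Q_N\,s_N^{\ell}}<\frac{2}{Q_N^{\ell+1}},
\]
where the last step uses $s_N=Q_N+1>Q_N$.

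For part~(i), suppose for contradiction that $K_{k,\ell}=a/b\in\mathbb{Q}$. Since $T_N>0$, the integer $aQ_N-bP_N$ is nonzero, so $|K_{k,\ell}-P_N/Q_N|\ge 1/(bQ_N)$; combined with the tail bound this gives $Q_N^{\ell}\le 2b$ for every~$N$, contradicting $Q_N\to\infty$. Specializing $(k,\ell)=(1,1)$ gives $s_{n+1}(1,1)=S_n$, whence $K_{1,1}=K$; and $s_{n+1}(2,1)=S_{n+1}$ yields $K_{2,1}=K-1$, so $K=1+K_{2,1}$ is irrational.

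For part~(ii) with $\ell\ge 2$, write $P_N/Q_N=p_N/q_N$ in lowest terms; since $q_N\mid Q_N$ the tail bound becomes $|K_{k,\ell}-p_N/q_N|<2/q_N^{\ell+1}$. The partial sums $\sigma_N$ are strictly increasing and therefore pairwise distinct, so the reduced denominators $q_N$ are unbounded. For any $\mu<\ell+1$, eventually $2<q_N^{\ell+1-\mu}$, giving $|K_{k,\ell}-p_N/q_N|<q_N^{-\mu}$ for infinitely many~$N$; by \eqref{EQ:mu def}, $\mu(K_{k,\ell})\ge \ell+1\ge 3>2$, so Thue--Siegel--Roth rules out $K_{k,\ell}\in\mathbb{A}$. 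I expect no serious obstacle beyond careful bookkeeping; the only subtle point is verifying that the reduced denominators $q_N$ really are unbounded, which follows immediately from strict monotonicity of $\sigma_N$.
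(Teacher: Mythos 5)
Your argument is correct and is essentially the paper's own proof: both use the partial sums $P_N/Q_N$ with $Q_N=s_N-1$, bound the tail above by a constant multiple of $Q_N^{-(\ell+1)}$, get the rationality contradiction from $|a/b-P_N/Q_N|\ge 1/(bQ_N)$, and conclude $\mu(K_{k,\ell})\ge\ell+1$ together with Thue--Siegel--Roth when $\ell\ge2$. (The only differences are cosmetic: the paper sums the geometric tail against ratio $1/s_N^{\ell}$ to get the sharper constant $1$ in place of your $2$, and your claimed shrink factor ``at least $4$'' should be ``at least $3$'' since $s_2(1,1)=3$ --- harmless, as a factor $\ge 3$ still gives a tail below twice its leading term.)
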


We could prove (i) from the fact that, \emph{given a non-decreasing sequence
of positive integers $A_0,A_1,\dotsc,$ the \emph{Engel series} $%
\sum_{n=0}^{\infty}(A_0A_1\dotsb A_n)^{-1}$ converges to an irrational
number if} (\emph{and only if}) $A_n$ \emph{tends to infinity with} $n$
(see, e.g., \cite[\S 2.2]{DuverneyBook}). Instead, we give a mostly
self-contained proof. It uses partial sums instead of continued fractions
(compare to Example~\ref{EX:lambda}).

\begin{proof}[Proof of Proposition \ref{PROP:K_k}]
Let us fix integers $k\ge1$ and $\ell\ge1,$ and write $s_{n}$ in place of $s_{n}(k,\ell)$. Then for $n\ge1,$ the $n$th partial sum of the series for $K_{k,\ell}$ is, in lowest terms,
 $$\frac{P_n}{Q_n}:= \sum_{i=0}^{n-1}\frac{1}{s_{i+1}-1}=\sum_{i=0}^{n-1}\frac{1}{s_0^{\ell}s_1^{\ell}\dotsb s_{i}^{\ell}}\quad  \implies \quad Q_n=s_0^{\ell}s_1^{\ell}\dotsb s_{n-1}^{\ell} = s_{n}-1.$$ 
With this value of $Q_n$ we see that
\begin{align} \label{EQ:psum}
 \begin{split}
0<K_{k,\ell}-\frac{P_n}{Q_n}&= \sum_{i=n}^{\infty}\frac{1}{s_0^{\ell}s_1^{\ell}\dotsb s_{i}^{\ell}}=
\frac{1}{Q_n}\sum_{j=0}^{\infty}\frac{1}{s_{n}^{\ell}\dotsb s_{n+j}^{\ell}}\\
&<\frac{1}{Q_n}\sum_{j=0}^{\infty}\frac{1}{(s_{n}^{\ell})^{j+1}}
=\frac{1}{Q_n}\frac{1}{s_{n}^{\ell}-1} 
\le \frac{1}{Q_n^{\ell+1}}.
 \end{split}
\end{align}

\noindent{\rm (i).} If $K_{k,\ell}\in\mathbb{Q},$ say $K_{k,\ell}=P/Q,$ then
\begin{equation} \label{EQ:contrad}
K_{k,\ell} - \frac{P_n}{Q_n} = \frac{P}{Q} - \frac{P_n}{Q_n}  \ge \frac{1}{QQ_n} > \frac{1}{Q^2_n}
\end{equation}
for $n$ so large that $Q_n>Q$. But $\ell \ge1$, so \eqref{EQ:contrad} contradicts \eqref{EQ:psum}. Therefore, $K_{k,\ell}\not\in\mathbb{Q}$.

\noindent{\rm (ii).} From \eqref{EQ:psum}, we infer that $\mu(K_{k,\ell})\ge\ell+1$. If $\ell\ge2,$ then $\mu(K_{k,\ell}) \ge 3$, so by the Thue-Siegel-Roth theorem, $K_{k,\ell}\not\in\mathbb{A}$. This completes the proof of the proposition.
\end{proof}

By a similar argument (also not using Theorem~\ref{THM:altproductseries=SCF}
or continued fractions), $C_{k,\ell}\not\in\mathbb{A}$ for $\ell\ge2$. The
case $\ell=1$ though (which includes Cahen's constant $C$) would seem to
require using Theorem~\ref{THM:altproductseries=SCF}, as in the proof of
Corollary~\ref{COR:SCFforCgen}. However, Duverney \cite{DuverneyUnpub} has
found a proof that $C\not\in\mathbb{A}$ which is similar to that of
Proposition~\ref{PROP:K_k}, part (ii). He uses relation \eqref{EQ:greedyC}
and the fact that $S_{2n+2}>\frac18S_{2n}^4,$ which follows from $%
S_{n+1}>\frac12S_{n}^2$.

Duverney has also answered Finch's question by pointing out that, as a
special case of a result of Becker \cite[p.~186, Remark~(ii)]{Becker}, \emph{%
the Kellogg-Curtiss constant $K$ is transcendental}.

\begin{conjecture}
For $k\ge1$, the Kellogg-Curtiss-type constant $K_{k,1}$ is transcendental.
\end{conjecture}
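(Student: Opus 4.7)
Setting $a_n := s_{n+1}(k,1)-1$, the recursion \eqref{EQ:Sylvester_k} with $\ell=1$ yields $a_0=k$ and $a_{n+1}=a_n(a_n+1)$, so $K_{k,1}=\sum_{n\ge 0}1/a_n$ is the greedy Egyptian expansion of itself. Since $a_i\mid a_{i+1}$, the standard partial sums $P_n/a_{n-1}:=\sum_{i<n}1/a_i$ satisfy $|K_{k,1}-P_n/a_{n-1}|\sim 1/a_n\sim 1/a_{n-1}^2$; by \eqref{EQ:mu def} this yields only $\mu(K_{k,1})\ge 2$, which is too weak to invoke Thue-Siegel-Roth directly.

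My plan is to split $K_{k,1}=E+O$ with $E:=\sum_{n\ge 0}1/a_{2n}$ and $O:=\sum_{n\ge 0}1/a_{2n+1}$, and to treat each subseries by the method Duverney \cite{DuverneyUnpub} used for Cahen's constant. Iterating the recursion gives $a_{2n+2}=a_{2n}(a_{2n}+1)(a_{2n}^2+a_{2n}+1)\ge a_{2n}^4$, and since $a_{2i}\mid a_{2n-2}$ for $i<n$, the partial sums $\sum_{i<n}1/a_{2i}$ have denominator $a_{2n-2}$ and tail bounded by $2/a_{2n}\le 2/a_{2n-2}^4$. By \eqref{EQ:mu def} this gives $\mu(E)\ge 4$, so $E\not\in\mathbb{A}$ by Thue-Siegel-Roth; the same argument shows $\mu(O)\ge 4$ and $O\not\in\mathbb{A}$.

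The main obstacle is passing from transcendence of $E$ and of $O$ separately to transcendence of $K_{k,1}=E+O$, since a sum of two transcendentals can easily be algebraic. Combining the partial-sum approximations does not help: with $q_E=a_{2n-2}\mid a_{2n-1}=q_O$ the joint denominator is $q_O$ and the dominant error stays of order $1/q_O^2$, yielding only $\mu(K_{k,1})\ge 2$. To break this barrier one needs additional structural input: either (a)~algebraic independence of $E$ and $O$ over $\mathbb{Q}$, naturally attacked via Nishioka-type Mahler machinery applied to the doubled Sylvester iteration $a\mapsto a(a+1)(a^2+a+1)$ of degree $4$; or (b)~a single Mahler-type power series $F_k(z)$ satisfying $F_k(z^d)=A(z)F_k(z)+B(z)$ with $F_k(\alpha)=K_{k,1}$ at an algebraic point $\alpha$, extending to arbitrary $k$ the construction behind Becker's theorem \cite{Becker} that settled $k=1$. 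Either route is itself a substantial transcendence result, and supplying one of them is where I expect the essential difficulty to lie.
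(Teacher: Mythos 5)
The statement you were asked to prove is labeled in the paper as a \emph{conjecture} (Conjecture~3), and the paper does not prove it --- so there is no ``paper's own proof'' to compare against. Your honest conclusion that you cannot close the gap is in fact the correct outcome: the problem is genuinely open for $k\ge2$. The paper only records, via Duverney's observation that Becker's theorem \cite{Becker} applies, that the $k=1$ case (the Kellogg--Curtiss constant $K=K_{1,1}$ itself) is transcendental; it offers no route for general $k$, which is precisely why the statement is posed as a conjecture rather than a proposition.

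Your intermediate analysis is correct and matches what the surrounding text of the paper hints at. The reduction to $a_{n+1}=a_n(a_n+1)$ with $a_0=k$ is right; the observation that the full series only gives $\mu(K_{k,1})\ge2$ is right (and is exactly why Proposition~\ref{PROP:K_k} proves transcendence only for $\ell\ge2$); the even/odd split with $a_{2n+2}\ge a_{2n}^4$ giving $\mu(E)\ge4$ and $\mu(O)\ge4$ is a sound computation, parallel to the Duverney argument for Cahen's constant described in the paper using $S_{2n+2}>\tfrac18 S_{2n}^4$. Your identification of the obstruction --- that transcendence of $E$ and $O$ separately says nothing about $E+O$, and that the joint denominators collapse the exponent back to~$2$ --- is exactly the essential difficulty, and your proposed remedies (algebraic independence of $E,O$ or a Mahler functional equation covering general~$k$, extending Becker's construction beyond $k=1$) are the natural directions. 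None of this constitutes a proof, but you were not expected to produce one; the paper leaves this open.
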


\begin{acknowledgment}{Acknowledgments.}
I thank Daniel Duverney and Michael Nyblom for (independently) pointing out the series which shows that Sierpi\'{n}ski's theorem is sharp. I am also indebted to Duverney for generalizing my earlier special case of Corollary~\ref{COR:(iii)}. I thank Steven Finch for comments on the manuscript and for discussions on \cite{SD}. Finally, I am grateful to Yohei Tachiya for a proof that $K$ is irrational.
\end{acknowledgment}

\begin{affil}
This manuscript was submitted posthumously.\\
The author passed away on January 16, 2020.
\end{affil}

\end{document}